\let\ORIlabel\label
\let\ORIrefstepcounter\refstepcounter
\AddToHook{package/hyperref/before}{
   \let\label\ORIlabel 
   \let\refstepcounter\ORIrefstepcounter}
\documentclass[review,hidelinks,onefignum,onetabnum]{siamart251216}

\newcounter{tablepanel}

\usepackage{graphicx,amsmath,amssymb}
\usepackage{algorithm}
\usepackage{tabularx}
\usepackage{enumitem}
\usepackage{algpseudocode}
\usepackage{booktabs}
\usepackage{threeparttable}
\usepackage{makecell}
\usepackage{epstopdf}
\usepackage{float}
\usepackage{appendix}
\usepackage{tikz}
\usepackage{cases}
\usepackage{mathrsfs}
\usepackage{bm} 
\usepackage{mathtools}
\DeclarePairedDelimiterX{\inner}[2]{\langle}{\rangle}{#1,\,#2}

\ifpdf
  \DeclareGraphicsExtensions{.eps,.pdf,.png,.jpg}
\else
  \DeclareGraphicsExtensions{.eps}
\fi

\newsiamremark{remark}{Remark}
\newsiamremark{hypothesis}{Hypothesis}
\crefname{hypothesis}{Hypothesis}{Hypotheses}
\newsiamthm{assumption}{Assumption}

\headers{Preconditioned High-index Saddle Dynamics}{B. Huang, H. Su, L. Zhang, J. Zhao}

\title{Computing Saddle Points in Stiff Problems via a Preconditioned High-index Saddle Dynamics Method \thanks{Submitted to the editors DATE.
\funding{L.Z. was supported by the National Natural Science Foundation of China (No. 12225102, T2321001, and 12288101) and the National Key Research and Development Program of China  2024YFA0919500. J.Z. was supported in part by the Beijing Natural Science Foundation (Grant No. JR25003), the National Natural Science Foundation of China (Grant No.12301520), and the Beijing Outstanding Young Scientist Program (No.JWZQ20240101027).}}}

\author{Bingzhang Huang\thanks{School of Mathematical Sciences, Peking University, Beijing, 100871, China (\email{1160205334a@gmail.com}).}
\and Hua Su\thanks{Beijing International Center for Mathematical Research, Peking University, Beijing, 100871, China. (\email{suhua@pku.edu.cn}).}
\and Lei Zhang\thanks{Corresponding author. School of Mathematical Sciences, Beijing International Center for Mathematical Research, Center for Quantitative Biology, Center for Machine Learning Research, Peking University, Beijing 100871, China (zhangl@math.pku.edu.cn).}
\and Jin Zhao\thanks{Academy for Multidisciplinary Studies, Capital Normal University, Beijing, 100048, China.
(\email{zjin@cnu.edu.cn}).}
}

\usepackage{amsopn}
\DeclareMathOperator{\diag}{diag}

\ifpdf
\hypersetup{
  pdftitle={Preconditioned High-index Saddle Dynamics for Computing Saddle Points},
  pdfauthor={Bingzhang Huang, Hua Su, Lei Zhang, Jin Zhao}
}
\fi

\begin{document}

\maketitle

\begin{abstract}
High-index saddle dynamics (HiSD) is an effective approach for computing saddle points of a prescribed Morse index and constructing solution landscapes for complex nonlinear systems. However, for problems with ill-conditioned Hessians arising from fine discretizations or stiff potentials, the efficiency of standard HiSD deteriorates as its convergence rate worsens with the spectral condition number $\kappa$. To address this issue, we propose a preconditioned HiSD (p-HiSD) framework that reformulates the continuous dynamics within a Riemannian metric induced by a symmetric positive definite preconditioner $M$. 
By generalizing orthogonal reflections and unstable-subspace tracking to the $M$-inner product, the proposed scheme modifies the geometry of the saddle-search dynamics while remaining computationally efficient.
Rigorous theoretical analysis confirms that the equilibria and their Morse indices are invariant under this metric. Furthermore, we establish the local exponential stability of the continuous dynamics and prove a discrete linear convergence rate governed by the preconditioned condition number $\kappa_M$. Consequently, the iteration complexity is sharply reduced from $O(\kappa\log(1/\epsilon))$ to $O(\kappa_M\log(1/\epsilon))$. We validate the method on nine numerical tests spanning finite-dimensional model problems, stiff lattice systems, and PDE discretizations. The results demonstrate that p-HiSD resolves stiffness-induced convergence failures, permits substantially larger step sizes, and significantly reduces iteration counts.
\end{abstract}

\begin{keywords}
high-index saddle dynamics, preconditioning, Riemannian metric, convergence analysis, saddle point, ill-conditioning
\end{keywords}

\begin{MSCcodes}
65K10, 65P40, 37M05, 49M37
\end{MSCcodes}

\section{Introduction}
Saddle points of energy functionals arise in many areas of science and engineering, including phase transitions in materials science~\cite{doi:10.1016/j.actamat.2009.10.041,PhysRevLett.98.265703}, nucleation phenomena in soft matter~\cite{PhysRevLett.104.148301}, transition pathways in chemical reactions~\cite{E2010TransitionpathTA,doi:10.1126/science.1253810}, protein folding~\cite{doi:10.1038/nsb0497-305}, and the loss landscape of deep neural networks~\cite{Dauphin2014IdentifyingAA,Fukumizu2019SemiflatMA}. Unlike local minima, which can be found by gradient descent, saddle points of general index require dedicated algorithms that can simultaneously ascend along unstable directions and descend along stable ones.

Over the past two decades, several computational methods have been developed for saddle point search in complex landscapes~\cite{bonfanti2017methods}, including the dimer method~\cite{Gould2016dimertype,doi:10.1063/1.480097}, the gentlest ascent dynamics and its variants~\cite{QUAPP2015,BOFILL2013203,E2011gentlest,gu2018simplified}, minimax methods~\cite{Li2001minimax,li2002convergence}, and the string method~\cite{doi:10.1063/1.2013256}, and recent proximal-minimization approaches \cite{gu2025iterative}. Among these, the high-index saddle dynamics (HiSD)~\cite{Yin2019HiSD} stands out for its ability to systematically compute saddle points of prescribed Morse index $k$ by coupling a position update---which ascends along the $k$-dimensional unstable subspace and descends along its complement---with a frame dynamics that tracks the unstable eigenvectors via Rayleigh quotient minimization on the Stiefel manifold. HiSD has been successfully applied to construct solution landscapes~\cite{PhysRevLett.124.090601,Yin2020} for liquid crystals~\cite{doi:10.1088/1361-6544/abc5d4,doi:10.1017/S0962492921000088}, quasicrystals~\cite{Yin_2021}, and Bose--Einstein condensates~\cite{doi:10.1016/j.xinn.2023.100546}.

Despite these successes, the efficiency of HiSD is strongly governed by the spectral condition number $\kappa$ of the Hessian at the saddle point~\cite{Luo2022Convergence}. Specifically, based on the rigorous error and convergence analysis of its Euler discretization~\cite{Zhang2022error}, the discrete scheme converges linearly with contraction factor $q = (\kappa-1)/(\kappa+1)$, implying an iteration complexity of $O(\kappa\log(1/\epsilon))$.
For problems involving stiff potentials, fine spatial discretizations, or large spectral separation, the Hessian can be severely ill-conditioned. As a result, the admissible step size becomes very small and the convergence can be slow. 
While recent momentum-based acceleration techniques~\cite{luo2025accelerated} and advanced discretization schemes, such as predictor-corrector methods \cite{LI2025108852}, have been proposed, they do not change the unfavorable spectral properties of the underlying operator.

In classical optimization, preconditioning is a standard strategy for accelerating iterative methods by transforming the original geometry into one with a more favorable spectral structure~\cite{Nesterov2018}. In the context of saddle point search, preconditioning has been successfully incorporated into several index-1 methods. For example, Gould et al.~\cite{Gould2016dimertype} introduced preconditioning into the dimer method and established local linear convergence, while Packwood et al.~\cite{Packwood2016Universal} developed universal preconditioners for transition-path computations in molecular systems. However, for \emph{high-index} saddle dynamics, a systematic preconditioning framework together with rigorous stability and convergence analysis is still lacking.

To address this difficulty, we propose the \emph{preconditioned high-index saddle dynamics} (p-HiSD). We introduce a Riemannian metric on $\mathbb{R}^n$ induced by a symmetric positive definite (SPD) preconditioner $M$ and reformulate the saddle dynamics within this metric. The main contributions of this paper are summarized as follows:

\begin{enumerate}[label=(\roman*)]
\item We derive the p-HiSD equations by generalizing the standard reflection and deflation operators to their $M$-orthogonal counterparts. The dynamics are driven by the $M$-gradient $\nabla_M E = M^{-1}\nabla E$ and the generalized eigenpairs satisfying $Hv = \lambda Mv$. We prove that the critical points and their Morse indices are invariant under this change of metric (Proposition~\ref{prop:invariance}).

\item We establish local exponential stability of p-HiSD (Theorem~\ref{thm:stability}) and prove that the discrete scheme converges linearly with rate $(\kappa_M-1)/(\kappa_M+1)$ (Theorem~\ref{thm:convergence}), where $\kappa_M$ denotes the preconditioned condition number. The analysis also accounts for errors arising from inexact eigenspace computations. This shows that, when $M$ captures the dominant Hessian anisotropy, the effective conditioning of the iteration can be substantially improved.

\item We develop a suite of practical preconditioners (e.g., spectral, subspace-inertial, block Jacobi, incomplete Cholesky) and validate them across stiff model problems and PDE discretizations. Numerical results demonstrate that p-HiSD permits stable step sizes up to $10^3$--$10^4$ times larger than those of the standard method, significantly reduces iteration counts, and remains robust in stiff regimes where standard HiSD may fail. 
\end{enumerate}

The remainder of this paper is organized as follows. Section~\ref{sec:preliminaries} reviews the standard HiSD formulation. Section~\ref{sec:p-hisd} introduces the Riemannian geometry induced by $M$ and derives the p-HiSD equations. Section~\ref{sec:stability} establishes local exponential stability. Section~\ref{sec:discrete} provides the discrete convergence analysis. Section~\ref{sec:preconditioners} discusses representative preconditioner designs and a heuristic selection rule used in our implementation. Numerical experiments are reported in Section~\ref{sec:numerical}, and concluding remarks are offered in Section~\ref{sec:conclusion}.

\section{Preliminaries}
\label{sec:preliminaries}
High-index saddle dynamics (HiSD) provides a dynamical-systems approach for computing saddle points of prescribed Morse index~\cite{Yin2019HiSD}. We begin by recalling the original formulation and the convergence result most relevant to the present work.

Let $E\in C^2(\mathbb{R}^n,\mathbb{R})$ with gradient $\nabla E(x)$ and Hessian $H(x)=\nabla^2 E(x)$. A critical point $x^\ast$ (with $\nabla E(x^\ast)=0$) is said to have \emph{Morse index} $k$ if $H(x^\ast)$ has exactly $k$ negative eigenvalues, counted with multiplicity. The goal of HiSD is to compute such an index-$k$ saddle by evolving the state variable $x$ together with a $k$-frame that tracks the unstable eigenspace of the local Hessian.

Specifically, HiSD consists of the coupled system
\begin{equation}
\label{eq:hisd}
\begin{aligned}
\dot{x} &= -\eta \mathcal{R}_V \nabla E(x), \\
\dot{v}_i &= -\tau \mathcal{P}_i H(x) v_i, \quad i = 1, \ldots, k,
\end{aligned}
\end{equation}
where $\eta,\tau>0$, the frame $V=[v_1,\dots,v_k]$ satisfies $v_i^\top v_j=\delta_{ij}$, and 
\begin{align*}
\mathcal{R}_V &= I - 2\sum_{i=1}^k v_i v_i^\top,\quad 
\mathcal{P}_i = I - v_i v_i^\top - 2\sum_{j<i} v_j v_j^\top. \label{eq:deflation}
\end{align*}
The operator $\mathcal{R}_V$ reverses the component of $\nabla E(x)$ along these directions and leaves the complementary component unchanged, so the state variable ascends along the approximate unstable subspace and descends in the remaining directions. 
Meanwhile, the frame equation updates the vectors $v_i$ to track the negative eigendirections of the Hessian, while the sequential deflation in $\mathcal{P}_i$ maintains orthogonality and ordering. 
Note that HiSD is not a gradient flow in the usual Riemannian sense, because $\mathcal{R}_V$ is indefinite, with eigenvalue $-1$ on $\operatorname{span}(V)$ and $+1$ on $\operatorname{span}(V)^\perp$.

The local convergence theory of HiSD shows that this coupled dynamics stabilizes the desired saddle point together with the associated unstable frame.

\begin{theorem}[Local stability~\cite{Yin2019HiSD}]
\label{thm:hisd-stability-original}
Let $x^*$ be a nondegenerate index-$k$ saddle with Hessian eigenvalues $\lambda_1 \le \cdots \le \lambda_k < 0 < \lambda_{k+1} \le \cdots \le \lambda_n$, all distinct. Let $\{u_i\}_{i=1}^k$ be eigenvectors corresponding to the negative eigenvalues. Then $(x^*, u_1, \ldots, u_k)$ is locally exponentially stable for~\eqref{eq:hisd}.
\end{theorem}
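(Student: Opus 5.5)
We prove local exponential stability by linearizing the coupled system \eqref{eq:hisd} at the equilibrium $(x^*,u_1,\dots,u_k)$ and showing that the Jacobian, restricted to the tangent space of the constraint manifold, has only eigenvalues with negative real part. The Lyapunov linearization theorem then gives the claim.

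First we check that the point is an equilibrium. Since $\nabla E(x^*)=0$, we have $\dot x=0$. For the frame, $\mathcal{P}_i H(x^*) u_i=\lambda_i\mathcal{P}_i u_i$. Using the orthonormality of the $u_j$, we get $\mathcal{P}_i u_i = u_i - u_i = 0$.

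Next we linearize, writing $x=x^*+\xi$ and $v_i=u_i+\zeta_i$. The $x$-equation gives
\[
\dot\xi=-\eta\,\mathcal{R}_{U}H^*\xi + O(|\xi|^2+|\xi||\zeta|),
\]
since $\nabla E(x^*+\xi)=H^*\xi+O(|\xi|^2)$ and the frame perturbation multiplies a gradient that is itself $O(|\xi|)$. Hence the Jacobian is block lower-triangular: the $\xi$-block does not depend on $\zeta$. Expanding in the eigenbasis $\{u_j\}$, the reflection $\mathcal{R}_U$ flips the sign of the first $k$ eigenvalues. So $-\eta\mathcal{R}_UH^*$ has eigenvalues $\eta\lambda_j$ for $j\le k$ and $-\eta\lambda_j$ for $j>k$, all of which are negative.

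For the $\zeta$-block at fixed $x=x^*$, we differentiate $-\tau\mathcal{P}_iH^*v_i$ in $v$. The perturbation $\zeta_i$ lies in the tangent space determined by $u_i^\top\zeta_i=0$ and $u_j^\top\zeta_i+u_i^\top\zeta_j=0$. We decompose each $\zeta_i$ along the eigenvectors $u_m$.
\begin{itemize}
\item For $m>k$, the linearized term is $-\tau(\lambda_m-\lambda_i)\,u_m^\top\zeta_i$. This is a decay because $\lambda_m>0>\lambda_i$.
\item For $m=j>i$, we get $-\tau(\lambda_j-\lambda_i)$, which is negative by the ordering and distinctness of the eigenvalues.
\item For $m=j<i$, the deflation term $-2v_jv_j^\top$ contributes. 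Combined with the constraint $u_j^\top\zeta_i=-u_i^\top\zeta_j$, this makes the block structure in the pair $(i,j)$ triangular when the indices are ordered suitably. Its diagonal entries again equal $-\tau(\lambda_i-\lambda_j)$ with the appropriate sign, or reduce to the $j>i$ component of $\zeta_j$.
\end{itemize}
We therefore order the unknowns $(i,m)$ lexicographically so that the frame Jacobian becomes block triangular with strictly negative diagonal. The $x$-dependence of the frame equation only adds off-diagonal blocks, because the full Jacobian is triangular in $(\xi,\zeta)$.

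The main obstacle is this frame block: handling the $m<i$ components correctly through the sequential deflation and the orthonormality constraint. Distinctness of the eigenvalues is needed exactly here, to ensure the diagonal entries $\lambda_j-\lambda_i$ are nonzero.

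Once all eigenvalues of the Jacobian on the tangent space are shown to be negative, with the spectral gap bounded by $\min\{\eta|\lambda_j|,\ \tau|\lambda_j-\lambda_i|\}$, exponential stability follows. To justify linearization on the manifold, we note that the constraint set is invariant under the flow, since $\frac{d}{dt}(v_i^\top v_j)=0$ can be verified directly, and then apply the linearization theorem in local coordinates of the Stiefel manifold.
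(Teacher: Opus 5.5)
Your proof is correct, but you handle the frame block differently from the paper. The paper does not reprove this cited result. Its proof of Theorem~\ref{thm:stability}, which reduces to this statement at $M=I$, linearizes in the full ambient space $\mathbb{R}^n\times\mathbb{R}^{nk}$ and imposes no orthonormality constraint on the perturbations. There $\mathcal{J}_{xv}=0$, and $\mathcal{J}_{vv}$ is block lower triangular because of the sequential deflation. Each diagonal block $\tau\bigl(\lambda_i I+2\sum_{j\le i}\lambda_j u_ju_j^\top-H(x^*)\bigr)$ has eigenvalues $\tau(\lambda_i+\lambda_\ell)$ for $\ell\le i$ and $\tau(\lambda_i-\lambda_\ell)$ for $\ell>i$. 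The factor $2$ in the deflation is what makes the directions $u_\ell$ with $\ell\le i$ (including the normalization direction $\ell=i$) stable, without any appeal to the constraint.

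You instead restrict to the tangent space of the Stiefel manifold and use $u_j^\top\zeta_i=-u_i^\top\zeta_j$ to eliminate those components. This works. On the tangent space, $\tfrac{d}{dt}(u_j^\top\zeta_i)=\tau(\lambda_i+\lambda_j)u_j^\top\zeta_i+2\tau\lambda_i\,u_i^\top\zeta_j=\tau(\lambda_j-\lambda_i)\,u_j^\top\zeta_i$, which is consistent with the equation for $u_i^\top\zeta_j$. In fact, in the independent coordinates $u_m^\top\zeta_i$ with $m>i$, the restricted frame Jacobian at fixed $x$ is already diagonal with entries $\tau(\lambda_i-\lambda_m)$. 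So no lexicographic triangularization is needed, and you could state this explicitly rather than gesture at it. Also, your phrase ``the $j>i$ component of $\zeta_j$'' should read ``the $u_i$-component of $\zeta_j$, with $i>j$''.

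As for trade-offs, your route isolates exactly the rates governing motion on the constraint manifold. The price is that you need the invariance of the orthonormality constraints and a linearization theorem in manifold coordinates. The paper's ambient route avoids both and gives a stronger conclusion: perturbations that violate orthonormality also decay exponentially. That robustness matters when orthonormality is only maintained approximately after discretization.
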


A practical implementation discretizes the state update explicitly and recomputes the unstable frame from the Hessian at the new iterate:
\begin{equation*}
\label{eq:discrete-hisd}
x_{m+1} = x_m - \eta \mathcal{R}_{V_m}\nabla E(x_m),
\qquad
V_{m+1} = \mathrm{Eig}_k(H(x_{m+1})),
\end{equation*}
where $V_m=[v_{m,1},\ldots,v_{m,k}]$ and $\mathrm{Eig}_k$ returns $k$ orthonormal eigenvectors associated with the algebraically smallest eigenvalues. The following result shows that the local convergence rate of this discrete scheme is controlled by the spectral condition number of the Hessian.

\begin{theorem}[Discrete convergence~\cite{Luo2022Convergence}]
\label{thm:discrete-hisd-original}
Assume in a neighborhood of $x^*$:
\begin{enumerate}
\item $H$ is Lipschitz: $\|H(x)-H(y)\|_2 \le K\|x-y\|$;
\item the eigenvalues of $H(x^*)$ satisfy $\mu \le |\lambda_i| \le L$.
\end{enumerate}
Let $\kappa=L/\mu$. With $\eta=2/(L+\mu)$ and $\|x_0-x^*\|$ sufficiently small,
\begin{equation}
\label{eq:hisd-rate}
\|x_{m+1}-x^*\| \le \frac{\kappa-1}{\kappa+1}\|x_m-x^*\| + O(\|x_m-x^*\|^2).
\end{equation}
\end{theorem}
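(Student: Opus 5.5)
We linearize the discrete map around $x^*$ and bound the error coming from using $V_m$ at $x_m$ rather than the exact unstable frame at $x^*$. Write $e_m = x_m - x^*$ and $H_* = H(x^*)$. Let $U_*$ be an orthonormal basis of the negative eigenspace of $H_*$, and set $\mathcal{R}_* = I - 2U_*U_*^\top$.

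\textbf{Step 1: expand the gradient.} Since $\nabla E(x^*)=0$ and $H$ is $K$-Lipschitz, Taylor's theorem with integral remainder gives
\[
\nabla E(x_m) = H_* e_m + r_m, \qquad \|r_m\| \le \tfrac{K}{2}\|e_m\|^2 .
\]
Substituting into the update yields
\[
e_{m+1} = (I - \eta \mathcal{R}_* H_*)e_m - \eta(\mathcal{R}_{V_m} - \mathcal{R}_*)H_* e_m - \eta \mathcal{R}_{V_m} r_m .
\]

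\textbf{Step 2: the linear part.} The matrices $\mathcal{R}_*$ and $H_*$ commute, since $U_*$ spans an invariant subspace of $H_*$. Hence $\mathcal{R}_* H_*$ is symmetric with eigenvalues $|\lambda_i| \in [\mu,L]$. With $\eta = 2/(L+\mu)$, the eigenvalues of $I-\eta\mathcal{R}_*H_*$ are $1-\eta|\lambda_i| \in [-q,q]$, where $q=(L-\mu)/(L+\mu)=(\kappa-1)/(\kappa+1)$. Therefore $\|I-\eta\mathcal{R}_*H_*\|_2 \le q$. The remainder term $\eta\mathcal{R}_{V_m}r_m$ is $O(\|e_m\|^2)$, because $\mathcal{R}_{V_m}$ is orthogonal and so has norm $1$.

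\textbf{Step 3: the frame perturbation (main obstacle).} We need
\[
\|\mathcal{R}_{V_m}-\mathcal{R}_*\|_2 = 2\|V_mV_m^\top - U_*U_*^\top\|_2 = O(\|e_m\|).
\]
This is the only step that uses the spectral gap $\lambda_k<0<\lambda_{k+1}$ of width at least $2\mu$ in an essential way.

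\begin{itemize}
\item $V_m = \mathrm{Eig}_k(H(x_m))$, and $\|H(x_m)-H_*\|_2 \le K\|e_m\|$. Once $K\|e_m\| < \mu$, Weyl's inequality shows that $H(x_m)$ has exactly $k$ negative eigenvalues, still separated from the rest.
\item The $k$-dimensional spectral projector is therefore well defined, independent of the choice of basis, and equal to $V_mV_m^\top$.
\item The Davis--Kahan $\sin\Theta$ theorem then gives $\|V_mV_m^\top - U_*U_*^\top\|_2 \le K\|e_m\|/\mu$.
\end{itemize}

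The point to watch is that only the projector enters $\mathcal{R}_{V_m}$. Consequently, no eigenvector-ordering or sign issues arise, even without the distinct-eigenvalue assumption.

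\textbf{Step 4: combine.} Putting the three bounds together gives
\[
\|e_{m+1}\| \le q\|e_m\| + \eta\Big(\tfrac{2KL}{\mu} + \tfrac{K}{2}\Big)\|e_m\|^2 ,
\]
which is the claimed estimate \eqref{eq:hisd-rate}. For $\|e_0\|$ small enough, the constant in front of $\|e_m\|^2$ times $\|e_0\|$ is less than $1-q$. Induction then shows the errors decrease monotonically, so the iterates stay in the neighborhood where the assumptions and Step 3 apply, and the bound holds for all $m$.
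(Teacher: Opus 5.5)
Your proof is correct and follows the same route as the paper's proof of Theorem~\ref{thm:convergence} specialized to $M=I$, $\theta=0$ (the paper itself only cites this Euclidean result): a Taylor expansion of $\nabla E$, splitting off $(I-\eta\mathcal{R}_*H_*)e_m$ with norm at most $(\kappa-1)/(\kappa+1)$, a Davis--Kahan bound on the reflector perturbation, and an induction that keeps the iterates in the neighborhood. Your Davis--Kahan constant, $\|V_mV_m^\top-U_*U_*^\top\|_2\le K\|e_m\|/\mu$, effectively uses the perturbed gap $2\mu-K\|e_m\|\ge\mu$, so it is more defensible than the paper's use of the unperturbed gap $2\mu$; the difference only changes the coefficient of the quadratic term.
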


The estimate~\eqref{eq:hisd-rate} makes the main numerical bottleneck transparent: the contraction factor $q=(\kappa-1)/(\kappa+1)$ deteriorates as $\kappa$ becomes large.
For stiff or highly anisotropic problems, the Euclidean geometry underlying both the reflected state update and the eigenspace computation therefore leads to small admissible step sizes and slow convergence. This observation motivates the central idea of the present work: rather than changing the saddle-search mechanism itself, we replace the Euclidean geometry by one induced by a symmetric positive definite preconditioner, with the goal of improving conditioning while preserving the target saddle structure.

\section{Preconditioned High-Index Saddle Dynamics}
\label{sec:p-hisd}
The preceding discussion suggests that the efficiency of HiSD is limited not by its saddle-search mechanism itself, but by the Euclidean metric underlying the reflected state update and frame dynamics. For ill-conditioned problems, we therefore replace this geometry by one induced by an SPD preconditioner, while preserving the target saddle structure.

\subsection{A metric reformulation of HiSD}
We now replace the three geometric ingredients of HiSD—the gradient, the unstable subspace, and the reflection/deflation operators—by their $M$-metric counterparts.

Let $M\in\mathbb{R}^{n\times n}$ be a symmetric positive definite (SPD) matrix. We equip $\mathbb{R}^n$ with the $M$-inner product
\begin{equation*}
\label{eq:M-inner-product}
\langle u,v\rangle_M := u^\top Mv,
\qquad
\|u\|_M := \sqrt{u^\top M u}.
\end{equation*}
Whenever needed, the induced operator norm is defined by
\begin{equation*}
\label{def:M-norm}
\|A\|_M := \sup_{x\neq 0}\frac{\|Ax\|_M}{\|x\|_M}
= \|M^{1/2} A M^{-1/2}\|_2,
\qquad A\in\mathbb{R}^{n\times n}.
\end{equation*}
Under this metric, the gradient of $E$ becomes
\begin{equation}
\label{eq:M-gradient}
\nabla_M E(x) = M^{-1} \nabla E(x),
\end{equation}
and the unstable directions are described by the generalized eigenvalue problem
\begin{equation}
\label{eq:gen-eig}
H(x)v=\lambda Mv.
\end{equation}

Since $M$ is SPD and $H(x)$ is symmetric, the generalized eigenvalues are real, and the eigenvectors may be chosen to be $M$-orthonormal. Thus, if $V=[v_1,\dots,v_k]$ satisfies $V^\top M V=I_k$, then $\operatorname{span}(V)$ serves as the unstable subspace in the preconditioned geometry, and the associated $M$-orthogonal projection is
\begin{equation*}
\mathcal{P}_V^M = VV^\top M.
\end{equation*}

With these ingredients, the Euclidean reflection and deflation operators in HiSD admit natural $M$-orthogonal analogues:
\begin{align*}
\mathcal{R}_V^M &= I - 2\sum_{i=1}^k v_i v_i^\top M, 
\qquad \text{($M$-orthogonal reflection)}, \label{eq:M-reflection}\\
\mathcal{P}_i^M &= I - v_i v_i^\top M - 2\sum_{j<i} v_j v_j^\top M,
\qquad \text{(deflated $M$-projection)}.
\end{align*}
The first operator reverses the $M$-orthogonal component along $\operatorname{span}(V)$, while the second enforces the sequential orthogonality constraints needed in the frame dynamics. Replacing the Euclidean gradient and projection operators in~\eqref{eq:hisd} by their $M$-metric counterparts leads to the preconditioned system

\begin{equation}
\label{eq:p-hisd}
\begin{aligned}
\dot{x} &= -\eta \mathcal{R}_V^M M^{-1}\nabla E(x), \\
\dot{v}_i &= -\tau \mathcal{P}_i^M M^{-1}H(x)v_i, \qquad i=1,\ldots,k,
\end{aligned}
\end{equation}
where the frame vectors satisfy $v_i^\top M v_j=\delta_{ij}$. When $M=I$, \eqref{eq:p-hisd} reduces to the original HiSD dynamics~\eqref{eq:hisd}.

\subsection{Invariance of the target saddle structure}

A basic requirement for any preconditioned reformulation is that it should not alter the target critical points or their Morse indices. The next result shows that p-HiSD satisfies exactly this property.

\begin{proposition}
\label{prop:invariance}
For any SPD preconditioner $M$:
\begin{enumerate}[label=(\alph*)]
\item $\nabla_M E(x^*) = 0$ if and only if $\nabla E(x^*) = 0$.\label{grad_zero}
\item The number of negative generalized eigenvalues of $(H(x^*), M)$ is equal to the number of negative eigenvalues of $H(x^*)$.\label{precond_index}
\end{enumerate}
\end{proposition}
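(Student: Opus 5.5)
Part \ref{grad_zero} is immediate from invertibility of $M$. Since $M$ is SPD, it is nonsingular. Hence $\nabla_M E(x^*) = M^{-1}\nabla E(x^*)$ vanishes exactly when $\nabla E(x^*)$ does: multiply by $M$ in one direction and by $M^{-1}$ in the other.

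For part \ref{precond_index}, reduce the generalized eigenproblem \eqref{eq:gen-eig} to a standard symmetric one via the symmetric square root $M^{1/2}$, which exists and is SPD because $M$ is. Set $H=H(x^*)$ and substitute $v = M^{-1/2}w$ in $Hv=\lambda Mv$. Left-multiplying by $M^{-1/2}$ gives
\[
\widetilde H w = \lambda w, \qquad \widetilde H := M^{-1/2} H M^{-1/2}.
\]
Thus the generalized eigenvalues of $(H,M)$, with multiplicities, are exactly the eigenvalues of the symmetric matrix $\widetilde H$. Multiplicities match because $v\mapsto M^{1/2}v$ is a linear bijection between the generalized eigenspace for $\lambda$ and the eigenspace of $\widetilde H$ for $\lambda$. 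The $M$-orthonormality of eigenvectors corresponds to Euclidean orthonormality of the $w$'s.

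The key step is to note that $\widetilde H = S^\top H S$ with $S = M^{-1/2}$ nonsingular (and symmetric), so $\widetilde H$ is congruent to $H$. Sylvester's law of inertia then says $\widetilde H$ and $H$ have the same numbers of negative, zero and positive eigenvalues. This gives the claim, and also shows that nondegeneracy of $x^*$ is preserved.

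If a self-contained argument is preferred to citing Sylvester, I would use the min--max characterization. The number of negative eigenvalues of a symmetric $A$ equals the maximal dimension of a subspace on which $u^\top A u<0$ for all $u\neq 0$. Since $u^\top \widetilde H u = (Su)^\top H (Su)$ and $S$ maps subspaces bijectively while preserving dimension, these maximal dimensions coincide. This is the only step with any content, and it is standard, so I expect no real obstacle.
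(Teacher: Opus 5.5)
Your proof is correct and follows essentially the same route as the paper. Part (a) uses the invertibility of $M$, and part (b) uses the congruence of $H(x^*)$ with $M^{-1/2}H(x^*)M^{-1/2}$ together with Sylvester's law of inertia. The only cosmetic difference is how you identify the generalized eigenvalues with those of $\widetilde H$: you use the substitution $v=M^{-1/2}w$, whereas the paper passes through the similarity $\widetilde H=M^{1/2}(M^{-1}H(x^*))M^{-1/2}$ and the equivalence $\det(H(x^*)-\lambda M)=0 \iff \det(M^{-1}H(x^*)-\lambda I)=0$.
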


\begin{proof}
Claim~\ref{grad_zero} follows directly from~\eqref{eq:M-gradient} and the invertibility of $M$.

For Claim~\ref{precond_index}, consider the symmetric matrix $\tilde{H} = M^{-1/2} H(x^*) M^{-1/2}$. By Sylvester's law of inertia, the congruent matrices $H(x^*)$ and $\widetilde H$ have the same inertia, i.e., the same numbers of positive, negative, and zero eigenvalues.

On the other hand, $\widetilde H$ is similar to $M^{-1}H(x^*)$ through
\begin{equation*}
\widetilde H = M^{1/2}(M^{-1}H(x^*))M^{-1/2},
\end{equation*}
so they have the same spectrum. Moreover, since
\begin{equation*}
\det(H(x^*)-\lambda M)=0
\quad\Longleftrightarrow\quad
\det(M^{-1}H(x^*)-\lambda I)=0,
\end{equation*}
the eigenvalues of $M^{-1}H(x^*)$ are precisely the generalized eigenvalues of $(H(x^*),M)$. Therefore, the number of negative generalized eigenvalues of $(H(x^*),M)$ coincides with the number of negative eigenvalues of $H(x^*)$.
\end{proof}

Proposition~\ref{prop:invariance} shows that preconditioning changes the geometry of the dynamics without changing the identity of the target saddles. This makes p-HiSD a legitimate acceleration mechanism for HiSD: it modifies conditioning and stability properties while preserving the critical points and their Morse indices.

\section{Local Exponential Stability}
\label{sec:stability}
In this section, we analyze the local behavior of p-HiSD near equilibrium. We first characterize the equilibria of~\eqref{eq:p-hisd}, and then determine which of them are locally exponentially stable.

We begin by identifying the stationary states of p-HiSD.

\begin{proposition}[Equilibrium characterization]
\label{prop:equilibrium}
Let $\{u_i\}_{i=1}^k$ be a set of $M$-orthonormal vectors (i.e., $u_i^\top M u_j = \delta_{ij}$). A state $Z^* = (x^*, u_1, \ldots, u_k)$ is an equilibrium of the p-HiSD system~\eqref{eq:p-hisd} if and only if $x^*$ is a critical point of $E$ and, for each $i=1,\ldots,k$, $u_i$ is a generalized eigenvector of $(H(x^*),M)$ corresponding to some generalized eigenvalue $\lambda_i$.
\end{proposition}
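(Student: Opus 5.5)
The plan is to prove the two implications separately. The $x$-equation and the frame equations decouple at an equilibrium: first we show the $x$-component vanishes iff $\nabla E(x^*)=0$, and then that each frame equation vanishes iff $u_i$ is a generalized eigenvector.

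\emph{The $x$-equation.} The operator $\mathcal{R}_V^M=I-2VV^\top M$ is an involution when $V^\top MV=I_k$:
\begin{equation*}
(\mathcal{R}_V^M)^2 = I-4VV^\top M+4VV^\top MVV^\top M = I.
\end{equation*}
Hence it is invertible. Since $M^{-1}$ is also invertible and $\eta>0$, we get $\dot x=0$ iff $\nabla E(x^*)=0$. This is consistent with Proposition~\ref{prop:invariance}\ref{grad_zero}.

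\emph{The frame equations, sufficiency.} Assume $H(x^*)u_i=\lambda_i M u_i$, i.e.\ $M^{-1}H(x^*)u_i=\lambda_i u_i$. Using $M$-orthonormality,
\begin{equation*}
\mathcal{P}_i^M u_i = u_i - u_i(u_i^\top M u_i) - 2\sum_{j<i}u_j(u_j^\top M u_i) = 0.
\end{equation*}
Therefore $\dot v_i=-\tau\lambda_i\mathcal{P}_i^M u_i=0$ for every $i$, and $Z^*$ is an equilibrium.

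\emph{The frame equations, necessity.} This is the step needing care, because $\mathcal{P}_i^M$ is not invertible, and I would argue by induction on $i$. Set $w_i:=M^{-1}H(x^*)u_i$, so that $\dot v_i=0$ reads
\begin{equation*}
w_i=(u_i^\top M w_i)\,u_i+2\sum_{j<i}(u_j^\top M w_i)\,u_j .
\end{equation*}
Thus $w_i\in\operatorname{span}\{u_1,\dots,u_i\}$. The key observation is that $M^{-1}H(x^*)$ is self-adjoint in the $M$-inner product, so $u_j^\top M w_i=u_j^\top H(x^*) u_i=u_i^\top M w_j$.

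For $i=1$ the relation gives $w_1=(u_1^\top Mw_1)u_1$ directly, so $u_1$ is a generalized eigenvector with $\lambda_1=u_1^\top H(x^*)u_1$.

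For the inductive step, assume $w_j=\lambda_j u_j$ for all $j<i$. Then
\begin{equation*}
u_j^\top M w_i=u_i^\top M w_j=\lambda_j\,u_i^\top M u_j=0 \qquad (j<i).
\end{equation*}
The relation therefore collapses to $w_i=(u_i^\top M w_i)u_i$. Hence $H(x^*)u_i=\lambda_iMu_i$ with $\lambda_i=u_i^\top H(x^*)u_i$, which completes the induction.

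The main obstacle is this necessity direction: the factor $2$ in the deflation could a priori allow nontrivial cancellation among the $u_j$-components. The induction combined with the $M$-symmetry of $M^{-1}H(x^*)$ is what rules this out. Together with the $x$-equation, this proves the equivalence.
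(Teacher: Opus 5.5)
Your proof is correct and follows the paper's argument essentially step for step: invertibility of $\mathcal{R}_V^M$ for the $x$-equation (which you justify via the involution identity, where the paper simply asserts it), the direct check $\mathcal{P}_i^M u_i=0$ for sufficiency, and induction on $i$ using the symmetry of $H(x^*)$ for necessity. One side note: the factor $2$ actually helps rather than hurts, since $\mathcal{P}_i^M$ acts as $-I$ on $\operatorname{span}\{u_1,\dots,u_{i-1}\}$, as $0$ on $u_i$ and as $I$ on the $M$-orthogonal complement, so $\ker\mathcal{P}_i^M=\operatorname{span}\{u_i\}$ and the induction is not even needed.
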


\begin{proof}
We prove the two implications separately.


\textit{Sufficiency:} Suppose $\nabla E(x^*)=0$. Evaluating the $x$-component of~\eqref{eq:p-hisd} at $Z^*$ yields
$\dot x(Z^*)=-\eta\mathcal{R}_V^M M^{-1}\nabla E(x^*)=0$.
For each $i$, the relation $H(x^*)u_i=\lambda_i M u_i$ implies $M^{-1}H(x^*)u_i=\lambda_i u_i$. By $M$-orthonormality, the projection term vanishes:
\begin{align*}
\mathcal{P}_i^M u_i = u_i-u_i(u_i^\top M u_i)-2\sum_{j<i}u_j(u_j^\top M u_i)=0.
\end{align*}

Consequently, the $v_i$-component of~\eqref{eq:p-hisd} is zero at $Z^*$:
$$\dot v_i(Z^*)=-\tau\mathcal{P}_i^M M^{-1}H(x^*)u_i=0.$$

\textit{Necessity:} Conversely, assume that $Z^*=(x^*,u_1,\dots,u_k)$ is an equilibrium. Since $\mathcal{R}_V^M$ is invertible, the condition $\dot x(Z^*)=0$ implies $\nabla E(x^*)=0$.

It remains to show that each $u_i$ is a generalized eigenvector. We argue by induction on $i$. For $i=1$, the equilibrium condition $\dot v_1(Z^*)=0$ gives
\begin{align*}
(I-u_1u_1^\top M)M^{-1}H(x^*)u_1=0,
\end{align*}
hence $H(x^*)u_1$ is parallel to $Mu_1$, with
$$H(x^*)u_1=\lambda_1Mu_1,\quad\lambda_1=u_1^\top H(x^*) u_1\in\mathbb{R}.$$

Now assume that $H(x^*)u_j=\lambda_jMu_j$ holds for all $j<i$. Since both $H(x^*)$ and $M$ are symmetric, taking the transpose yields $u_j^\top H(x^*)=\lambda_j u_j^\top M$. The $M$-orthonormality then implies $u_j^\top H(x^*)u_i=\lambda_j u_j^\top M u_i=0$ for all $j<i$.
Substituting this into the equilibrium condition $\dot v_i(Z^*)=0$ and using the definition of $\mathcal{P}_i^M$, we obtain
\begin{equation*}
\begin{aligned}
0 &= \mathcal{P}_i^M M^{-1}H(x^*)u_i = M^{-1}H(x^*)u_i-(u_i^\top H(x^*)u_i)u_i-2\sum_{j<i}u_j\,(u_j^\top H(x^*)u_i) \\
&= M^{-1}H(x^*)u_i-(u_i^\top H(x^*)u_i)u_i.
\end{aligned}
\end{equation*}
Thus, letting $\lambda_i:=u_i^\top H(x^*)u_i$ yields the generalized eigenvalue equation $H(x^*)u_i=\lambda_iMu_i$.
\end{proof}

After identifying all equilibria of p-HiSD in Proposition~\ref{prop:equilibrium}, we now determine which of them are attracting. The next theorem shows that local exponential stability is obtained precisely when the frame is aligned with the generalized eigendirections corresponding to the $k$ negative generalized eigenvalues.

\begin{theorem}[Local exponential stability]
\label{thm:stability}
Let $\mathcal{Z}^* = (x^*, u_1, \ldots, u_k)$ be an equilibrium point characterized by Proposition~\ref{prop:equilibrium}. Then $\mathcal{Z}^*$ is locally exponentially stable for the system~\eqref{eq:p-hisd} if and only if:
\begin{enumerate}
\item $x^*$ is a nondegenerate index-$k$ saddle of $E$ (i.e., $(H(x^*),M)$ has exactly $k$ negative generalized eigenvalues);\label{local1}
\item The vectors $u_1,\ldots,u_k$ are the generalized eigenvectors associated with these $k$ negative generalized eigenvalues, ordered such that
$\lambda_1 < \cdots < \lambda_k < 0$.\label{local2}
\end{enumerate}
\end{theorem}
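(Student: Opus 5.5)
Our plan is to reduce the statement to the Euclidean result, Theorem~\ref{thm:hisd-stability-original}, via the change of variables $y=M^{1/2}x$, $w_i=M^{1/2}v_i$, and then to check the ``only if'' direction directly on the block Jacobian. Set $\tilde E(y)=E(M^{-1/2}y)$. Then $\nabla\tilde E(y)=M^{-1/2}\nabla E(x)$ and $\nabla^2\tilde E(y)=M^{-1/2}H(x)M^{-1/2}=:\tilde H$. $M$-orthonormality of $V$ is exactly Euclidean orthonormality of $W=M^{1/2}V$. We also have
\begin{equation*}
M^{1/2}\mathcal{R}_V^M M^{-1/2}=I-2\sum_i w_iw_i^\top=\mathcal{R}_W,
\qquad
M^{1/2}\mathcal{P}_i^M M^{-1/2}=\mathcal{P}_i(W).
\end{equation*}
Multiplying both equations of \eqref{eq:p-hisd} by $M^{1/2}$ therefore gives
\begin{equation*}
\dot y=-\eta\mathcal{R}_W\nabla\tilde E(y),
\qquad
\dot w_i=-\tau\mathcal{P}_i\tilde H w_i .
\end{equation*}
This is exactly the HiSD system \eqref{eq:hisd} for $\tilde E$. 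The map $(x,V)\mapsto(y,W)$ is a fixed invertible linear map, so it preserves local exponential stability in both directions, and it sends equilibria to equilibria.

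Under this map, generalized eigenpairs of $(H(x^*),M)$ become standard eigenpairs of $\tilde H$ with the same eigenvalues, as in the proof of Proposition~\ref{prop:invariance}. The index and nondegeneracy conditions also transfer by Proposition~\ref{prop:invariance}. So the ``if'' direction follows from Theorem~\ref{thm:hisd-stability-original} applied to $\tilde E$ at $y^*=M^{1/2}x^*$. One caveat: that theorem assumes all eigenvalues are distinct, whereas our hypothesis only gives $\lambda_1<\cdots<\lambda_k<0$. To cover this, we will redo the Jacobian computation, which is needed anyway for the converse.

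At the equilibrium, the Jacobian of the transformed system is block lower triangular. The $y$-block is $-\eta\mathcal{R}_U\tilde H$, where $U=[w_1^*,\dots,w_k^*]$ are eigenvectors of $\tilde H$. This matrix is diagonal in the eigenbasis of $\tilde H$, with entries $-\eta\lambda_i$ for $i\le k$ and $-\eta\lambda_j$ for the remaining eigenvalues. The $w$-blocks do not feed back into $y$, because $\nabla\tilde E(y^*)=0$ kills the terms coming from perturbing $W$. For each $i$, the diagonal $w_i$-block is the linearization of $w\mapsto-\tau\mathcal{P}_i\tilde Hw$ at $w_i^*$. Expanding in the eigenbasis, its eigenvalues are:
\begin{itemize}
\item $-\tau(\lambda_j-\lambda_i)$ for eigen-directions $j$ outside $\{1,\dots,i\}$;
\item $-\tau(-\lambda_j-\lambda_i)$-type combinations, or $-2\tau\lambda_i$ along $w_i$ itself, for the directions $j<i$, which are coupled through the $-2\sum_{j<i}$ term and the orthonormality constraint.
\end{itemize}
We will organize this computation along the tangent space of the Stiefel manifold, as in \cite{Yin2019HiSD}.

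Both directions now come from the signs of these eigenvalues. The full Jacobian is Hurwitz (on the constraint manifold) if and only if three conditions hold:
\begin{itemize}
\item $\lambda_1,\dots,\lambda_k<0$ and all other eigenvalues are $>0$, from the $y$-block; this gives nondegeneracy and index exactly $k$;
\item $\lambda_j>\lambda_i$ for all $j>i$, from the $w$-blocks; this forces $u_1,\dots,u_k$ to be the $k$ smallest eigenvectors;
\item the ordering $\lambda_1<\cdots<\lambda_k$ holds.
\end{itemize}
For the converse, we show that if any condition fails, some Jacobian eigenvalue has nonnegative real part. In that case the equilibrium cannot be exponentially stable: positive real part gives instability by Lyapunov's indirect method, and a zero eigenvalue rules out exponential decay along that direction.

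The main obstacle is the careful linearization of the $w_i$-equations. We must restrict to variations tangent to $\{W^\top W=I\}$, handle the cross-coupling among $w_j$ for $j<i$ induced by the deflation term, and confirm that each pair $(i,j)$ contributes an eigenvalue whose sign is governed by $\lambda_j-\lambda_i$, so that strict ordering is exactly what is needed. We will handle this by writing the perturbation $\delta w_i=\sum_j c_{ij}w_j^*$ and verifying that the induced linear map on the coefficients $c_{ij}$ is triangular.
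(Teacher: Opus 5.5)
Your proposal is correct, and it reaches the result by a different route than the paper. You observe that $(x,V)\mapsto(M^{1/2}x,M^{1/2}V)$ is an exact linear conjugacy from p-HiSD to Euclidean HiSD for $\tilde E(y)=E(M^{-1/2}y)$, and you then linearize in Euclidean coordinates. The paper never makes this change of variables for the dynamics. It linearizes directly in the $M$-inner product, obtaining $\mathcal{J}_{xx}=-\eta\mathcal{R}_V^M M^{-1}H(x^*)$ and $\mathcal{J}_{v_iv_i}=\tau\bigl(\lambda_i I+2\sum_{j\le i}\lambda_j u_ju_j^\top M-M^{-1}H(x^*)\bigr)$, and reads off the eigenvalues on the $M$-orthonormal generalized eigenbasis. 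The two computations are conjugate by $M^{1/2}$, so the analytic core is identical. Your framing explains structurally why every Euclidean statement transfers. You also rightly note that citing Theorem~\ref{thm:hisd-stability-original} alone would not suffice, since it assumes all eigenvalues distinct and gives no converse. The paper's framing keeps all formulas in the original variables and avoids the auxiliary energy.

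Two sign slips need fixing. First, since $\mathcal{R}_U w_i^*=-w_i^*$ for $i\le k$, the $y$-block has eigenvalue $+\eta\lambda_i$ on the frame directions, not $-\eta\lambda_i$. The conclusion you draw from the $y$-block matches the corrected sign, not the entries you wrote. Second, the $w_i$-block eigenvalue along $w_i^*$ is $+2\tau\lambda_i$, the case $\ell=i$ of $\tau(\lambda_i+\lambda_\ell)$. With your sign it would be positive at the target saddle, and the ``if'' direction would fail in the full space.

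The Stiefel tangent-space detour is also unnecessary. As in the paper, on all of $\mathbb{R}^{n(k+1)}$ the Jacobian is block lower triangular, and each $\mathcal{J}_{w_iw_i}$ is diagonal in the eigenbasis with eigenvalues $\tau(\lambda_i+\lambda_\ell)$ for $\ell\le i$ and $\tau(\lambda_i-\lambda_\ell)$ for $\ell>i$. So the triangularity in the $c_{ij}$ is immediate, and the normal-direction eigenvalues are automatically negative once $\lambda_1,\dots,\lambda_k<0$. If you do restrict to the tangent space, you owe a check that $\{W^\top W=I\}$ is invariant under the flow. There the eigenvalues $\tau(\lambda_i+\lambda_\ell)$ disappear, and the antisymmetric pairs $c_{i\ell}=-c_{\ell i}$, $\ell<i$, carry $\tau(\lambda_\ell-\lambda_i)$.

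For the converse, the clean justification is the standard fact that local exponential stability of a $C^1$ system forces its linearization to be Hurwitz. That is preferable to an argument about decay along a zero-eigenvalue direction.
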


\begin{proof}
We extend the linearization analysis in~\cite{Yin2019HiSD} to the Riemannian setting induced by $M$. Writing the system~\eqref{eq:p-hisd} as $\dot Z = F(Z)$ for the augmented state $Z=(x,v_1,\ldots,v_k)$. To establish local exponential stability, we analyze the spectrum of the Jacobian $\mathcal{J} := DF(Z^*)$ at equilibrium. Partitioning $\mathcal{J}$ according to the splitting of $Z$ yields
\begin{equation*}
\mathcal{J} = \begin{pmatrix}
\mathcal{J}_{xx} & \mathcal{J}_{xv} \\
\mathcal{J}_{vx} & \mathcal{J}_{vv}
\end{pmatrix}.
\end{equation*}
At the equilibrium $Z^*$, a first-order perturbation in the frame $(v_1,\ldots,v_k)$ affects $\dot{x}$ solely through the reflector $\mathcal{R}_V^M$. Since $\nabla E(x^*) = 0$, this dependence vanishes, and thus $\mathcal{J}_{xv} = 0$. Consequently, $\mathcal{J}$ is block lower triangular, and its spectrum is given by $\sigma(\mathcal{J}) = \sigma(\mathcal{J}_{xx}) \cup \sigma(\mathcal{J}_{vv})$. We compute these spectra using the generalized eigenbasis of $(H(x^*), M)$.
Let $\{(\lambda_j, u_j)\}_{j=1}^n$ be a complete set of generalized eigenpairs, where $u_1, \dots, u_k$ correspond to the equilibrium state and $u_{k+1}, \dots, u_n$ complete an $M$-orthonormal basis.

First, we analyze $\mathcal{J}_{xx}$. Linearizing $\dot x=-\eta\mathcal{R}_V^M M^{-1}\nabla E(x)$ yields
\begin{equation*}
\mathcal{J}_{xx}=-\eta\mathcal{R}_V^M M^{-1}H(x^*), \quad \text{where} \quad \mathcal{R}_V^M=I-2\sum_{i=1}^k u_i u_i^\top M .
\end{equation*}
The reflection operator acts as $\mathcal{R}_V^M u_j = -u_j$ on the subspace spanned by $\{u_1, \dots, u_k\}$ and as $\mathcal{R}_V^M u_j = u_j$ on its orthogonal complement, and thus
\begin{equation}
\label{eig:J_xx}
\mathcal{J}_{xx}u_j=-\eta\mathcal{R}_V^M(\lambda_j u_j)=
\begin{cases}
\eta\lambda_j u_j, & j\le k,\\
-\eta\lambda_j u_j, & j>k.
\end{cases}
\end{equation}
Next, we consider $\mathcal{J}_{vv}$. Due to the deflation structure, the dynamics of $v_i$ depends only on $\{v_1,\dots,v_i\}$, rendering $\mathcal{J}_{vv}$ block lower triangular. Hence, $\sigma(\mathcal{J}_{vv})=\bigcup_{i=1}^k\sigma(\mathcal{J}_{v_i v_i})$. Adapting the linearization from~\cite{Yin2019HiSD} to the $M$-inner product, the diagonal block at $Z^*$ is
\begin{equation*}
\mathcal{J}_{v_i v_i}=\tau\Big(\lambda_i I + 2\sum_{j=1}^i \lambda_j u_j u_j^\top M - M^{-1}H(x^*)\Big).
\end{equation*}
Applying this operator to the basis vector $u_\ell$ yields
\begin{equation}
\label{eig:J_ii}
\mathcal{J}_{v_i v_i}u_\ell=
\begin{cases}
\tau(\lambda_i+\lambda_\ell)u_\ell, & \ell\le i,\\
\tau(\lambda_i-\lambda_\ell)u_\ell, & \ell> i.
\end{cases}
\end{equation}
The stability of $\mathcal{Z}^*$ is determined by the sign of the real parts of the eigenvalues in~\eqref{eig:J_xx} and~\eqref{eig:J_ii}. We now prove the necessity and sufficiency of conditions \ref{local1} and \ref{local2}.

\textit{Sufficiency:} Suppose conditions \ref{local1} and \ref{local2} hold. Then $\lambda_1<\cdots<\lambda_k<0$. Since $x^*$ is a nondegenerate index-$k$ saddle, we also have $\lambda_\ell>0$ for all $\ell>k$.
From~\eqref{eig:J_xx}, we find that for $j \le k$, the eigenvalue is $\eta\lambda_j < 0$, and for $j > k$, it is $-\eta\lambda_j < 0$. Thus, $\sigma(\mathcal{J}_{xx}) \subset \mathbb{R}^-$.
From~\eqref{eig:J_ii}, consider the term $\tau(\lambda_i \pm \lambda_\ell)$. If $\ell \le i$, then $\tau(\lambda_i + \lambda_\ell) < 0$ (sum of two negatives). If $\ell > i$, we distinguish two cases:
(a) $i < \ell \le k$: here $\lambda_i < \lambda_\ell$ by ordering (2), so $\tau(\lambda_i - \lambda_\ell) < 0$;
(b) $\ell > k$: here $\lambda_\ell > 0 > \lambda_i$, so $\tau(\lambda_i - \lambda_\ell) < 0$.
In all cases, the eigenvalues are negative. Thus, $\sigma(\mathcal{J}) \subset \mathbb{R}^-$, implying local exponential stability.

\textit{Necessity:} Conversely, assume $\mathcal{Z}^*$ is locally exponentially stable. The condition $\sigma(\mathcal{J}_{xx}) \subset \mathbb{R}^-$ combined with~\eqref{eig:J_xx} implies $\eta\lambda_j < 0$ for $j \le k$ and $-\eta\lambda_j < 0$ (i.e., $\lambda_j > 0$) for $j > k$. This signifies that $x^*$ is exactly an index-$k$ saddle. Furthermore, the condition $\sigma(\mathcal{J}_{vv}) \subset \mathbb{R}^-$ requires that for any $i \in \{1,\dots,k\}$ and $\ell > i$, the eigenvalue $\tau(\lambda_i - \lambda_\ell)$ must be negative. Since $\tau > 0$, this enforces the strict ordering $\lambda_1 < \dots < \lambda_k$, implying that $u_1,\dots,u_k$ must be the eigenvectors corresponding to the $k$ negative eigenvalues sorted by magnitude.
\end{proof}

The theorem shows that p-HiSD has exactly the expected stable equilibria: the state variable converges to an index-$k$ saddle, while the frame aligns with the unstable generalized eigenspace.

\begin{remark}
The same Jacobian argument shows that replacing the constant matrix $M$ by a smooth, state-dependent SPD metric $M(x)$ does not change the linearized spectrum at equilibrium. The additional terms arising from differentiating $M(x)^{-1}$ and $\mathcal{R}_V^{M(x)}$ are multiplied by $\nabla E(x^*)$ and therefore vanish at the critical point, while the block lower triangular structure is preserved.
\end{remark}

\section{Discrete Convergence Analysis}
\label{sec:discrete}

We now turn to the discrete p-HiSD iteration and study its local convergence near a nondegenerate index-$k$ saddle.

We adopt the discrete preconditioned HiSD scheme summarized in Algorithm~\ref{alg:p-hisd}. At each outer step, the state variable is updated by a reflected preconditioned descent direction, while the unstable frame is approximated by a small number of inner iterations applied to the generalized eigenvalue problem.

\begin{algorithm}[htbp]
\caption{Discrete Preconditioned HiSD (p-HiSD)}
\label{alg:p-hisd}
\begin{algorithmic}[1]
\Require Initial guess $x_0$, preconditioner $M$, index $k$, step size $\eta$, eigenvector step size $\tau$, inner iterations $J$, tolerance $\varepsilon$
\Ensure Approximate index-$k$ saddle point $x^*$
\State $m \gets 0$; initialize $V_0 = [v_1, \ldots, v_k]$, $M$-orthonormalize
\Repeat
    \State $g_m \gets \nabla E(x_m)$
    \State $H_m \gets \nabla^2 E(x_m)$
    \For{$j = 1, \ldots, J$} \Comment{Inner iterations: iterative generalized eigensolve}
        \For{$i = 1, \ldots, k$}
            \State $v_i \gets v_i - \tau \, \mathcal{P}_i^M \, M^{-1} H_m \, v_i$ \Comment{Rayleigh quotient minimization}
        \EndFor
        \State $M$-orthonormalize $V_m = [v_1, \ldots, v_k]$ \Comment{Enforce $V_m^\top M V_m = I_k$}
    \EndFor
    \State  $ \tilde{g}_m \gets M^{-1}g_m$
    \State $d_m \gets -\tilde{g}_m + 2 V_m (V_m^\top g_m)$ \Comment{$O(nk)$ operations}
    \State $x_{m+1} \gets x_m + \eta \, d_m$
    \State $m \gets m + 1$
\Until{$\|g_m\| < \varepsilon$}
\State \Return $x_m$
\end{algorithmic}
\end{algorithm}

The inner loop is a finite-step discretization of the continuous frame dynamics $\dot{v}_i = - \tau \mathcal{P}_i^M M^{-1} H v_i$ from~\eqref{eq:p-hisd}, and can be interpreted as Riemannian gradient descent for the generalized Rayleigh quotient $\frac{v^\top H v}{v^\top M v}$ on the generalized Stiefel manifold
. In the ideal limit $J\to\infty$, it recovers the generalized eigenspace associated with the $k$ smallest eigenvalues. In practice, however, only a small number of inner iterations is needed because the outer iterates move gradually and the previous frame provides an effective warm start. As a result, the unstable eigenspace is tracked only approximately, and this approximation error must be incorporated into the convergence analysis.

To quantify this effect, let $V_m$ denote the exact $M$-orthonormal basis of the unstable generalized eigenspace of $(H(x_m),M)$, and let $\widehat V_m$ denote the basis actually used by the algorithm. The following assumptions encode the local regularity of the Hessian and the accuracy of this inexact eigenspace approximation.

\begin{assumption}[Regularity in $M$-metric]
\label{ass:regularity}
In a neighborhood $U(x^*, \delta)$ of the saddle point $x^*$, we assume:
\begin{enumerate}[label=(\alph*)]
\item Hessian Lipschitz continuity: There exists $K > 0$ such that
\begin{equation*}
\|M^{-1}(H(x) - H(y))\|_M \le K\|x - y\|_M \quad \forall x, y \in U(x^*, \delta).
\end{equation*}

\item Spectral bounds: The point $x^*$ is a nondegenerate index-$k$ saddle. The generalized eigenvalues of $(H(x^*),M)$ satisfy
\begin{equation*}
-L\le \lambda_1 \le \dots \le \lambda_k \le -\mu < 0 < \mu \le \lambda_{k+1} \le \dots \le \lambda_n \le L,
\end{equation*}
for some constants $0<\mu\le L<\infty$.

\item Inexact projector: There exists $\theta\ge 0$ such that, for all $m$ where $x_m\in U(x^*,\delta)$,
\begin{equation*}\label{eq:proj_err}
\bigl\| V_m V_m^\top M - \widehat{V}_m \widehat{V}_m^\top M \bigr\|_M \le \theta.
\end{equation*}
\end{enumerate}
\end{assumption}

Under Assumption~\ref{ass:regularity}, the discrete p-HiSD iteration inherits a local linear convergence rate governed by the \emph{preconditioned condition number} $\kappa_M := \frac{L}{\mu}$, up to a perturbation induced by the inexact eigenspace computation.

\begin{theorem}[Local linear convergence]
\label{thm:convergence}
Under Assumption~\ref{ass:regularity}, suppose $\|x_0 - x^*\|_M$ is sufficiently small.
If the eigenvector computation error satisfies $\theta < \frac{1}{2\kappa_M}$,
then Algorithm~\ref{alg:p-hisd} with step size $\eta = \frac{2}{L + \mu}$ satisfies
\begin{equation*}
\label{eq:main-convergence}
\|x_{m+1} - x^*\|_M \le \left( \frac{\kappa_M - 1}{\kappa_M + 1} + \frac{4\kappa_M}{\kappa_M + 1}\theta \right) \|x_m - x^*\|_M + C\|x_m - x^*\|_M^2
\end{equation*}
for some constant $C > 0$. Consequently, the iteration converges linearly to $x^*$.
\end{theorem}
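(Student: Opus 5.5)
The plan is to write the one-step error recursion in the $M$-metric and split it into an exact-projector part, an inexact-projector perturbation, and a Taylor remainder. Set $e_m := x_m - x^*$ and $P_m := V_mV_m^\top M$, the exact $M$-orthogonal projector onto the unstable generalized eigenspace of $(H(x_m),M)$. Let $\widehat P_m := \widehat V_m\widehat V_m^\top M$. Note that the direction in Algorithm~\ref{alg:p-hisd} is $d_m = -(I-2\widehat P_m)M^{-1}\nabla E(x_m)$. Taylor expansion of $\nabla E$ about $x^*$ gives $M^{-1}\nabla E(x_m) = M^{-1}H(x_m)e_m + r_m$ with $\|r_m\|_M \le \tfrac K2\|e_m\|_M^2$, using Assumption~\ref{ass:regularity}(a) on the segment. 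This yields
\begin{equation*}
e_{m+1} = \bigl(I - \eta(I-2P_m)M^{-1}H(x_m)\bigr)e_m + 2\eta(\widehat P_m - P_m)M^{-1}H(x_m)e_m - \eta(I-2\widehat P_m)r_m .
\end{equation*}

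\textbf{Main term.} The operator $A_m := M^{-1}H(x_m)$ is self-adjoint in $\langle\cdot,\cdot\rangle_M$, and $P_m$ is the $M$-orthogonal spectral projector of $A_m$ onto its negative part. Hence $(I-2P_m)A_m$ is $M$-self-adjoint with eigenvalues $|\lambda_j(x_m)|$. By Weyl's inequality applied to $M^{1/2}A_mM^{-1/2}$ and Assumption~\ref{ass:regularity}(a),(b), these eigenvalues lie in $[\mu - K\|e_m\|_M,\ L + K\|e_m\|_M]$. With $\eta = 2/(L+\mu)$, the standard estimate $\max_{t\in[\mu,L]}|1-\eta t| = (L-\mu)/(L+\mu)$ then gives
\begin{equation*}
\|I - \eta(I-2P_m)A_m\|_M \le \frac{\kappa_M-1}{\kappa_M+1} + \eta K\|e_m\|_M .
\end{equation*}
The $O(\|e_m\|_M)$ excess contributes to the quadratic term. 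This step is cleaner than applying Lipschitz bounds on $P_m$ directly, because the exact projector at $x_m$ is exactly adapted to $A_m$. One point needs checking: nondegeneracy and the gap $\mu>0$ must persist for small $\|e_m\|_M$, which again follows from Weyl's inequality.

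\textbf{Perturbation terms.} By Assumption~\ref{ass:regularity}(c), $\|2\eta(\widehat P_m-P_m)A_me_m\|_M \le 2\eta\theta\|A_m\|_M\|e_m\|_M \le 2\eta\theta(L+K\|e_m\|_M)\|e_m\|_M$. Since $2\eta L = 4L/(L+\mu) = 4\kappa_M/(\kappa_M+1)$, this produces exactly the stated $\theta$-coefficient plus a quadratic remainder. For the Taylor remainder, $\|I-2\widehat P_m\|_M \le \|I-2P_m\|_M + 2\theta = 1+2\theta$, because $I-2P_m$ is an $M$-orthogonal reflection. So this term is at most $\eta(1+2\theta)\tfrac K2\|e_m\|_M^2$. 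Collecting all $O(\|e_m\|_M^2)$ pieces defines $C$.

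\textbf{Linear convergence.} The condition $\theta < 1/(2\kappa_M)$ gives $\frac{\kappa_M-1+4\kappa_M\theta}{\kappa_M+1} < \frac{\kappa_M+1}{\kappa_M+1} = 1$. So the contraction factor $q$ is below one. Choosing $\|e_0\|_M$ small enough that $q + C\|e_0\|_M < 1$ and $x_0$ lies well inside $U(x^*,\delta)$, induction keeps all iterates in the neighborhood and yields $\|e_m\|_M \le (q+C\|e_0\|_M)^m\|e_0\|_M$. The step I expect to be most delicate is the main-term bound: it requires the uniform spectral gap of $A_m$ near $x^*$ and careful bookkeeping so that the $x_m$-dependence of $P_m$ never has to be differentiated.
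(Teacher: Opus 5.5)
Your proof is correct, but it takes a different route from the paper's.

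\textbf{The paper's route.} The paper expands $\nabla E(x_m)=H(x^*)e_m+r_m$ around the saddle. It then compares everything with the fixed operator $T^*=\mathcal{R}^*M^{-1}H(x^*)$, whose spectrum is exactly $\{|\lambda_j|\}\subset[\mu,L]$. The price is the mismatch term $\eta(\mathcal{R}^*-\mathcal{R}_m)M^{-1}H(x^*)e_m$. This requires an eigen\emph{vector} perturbation result: $\|\mathcal{R}_m-\mathcal{R}^*\|_M=2\|\sin\Theta\|$ combined with Davis--Kahan with gap $2\mu$. It yields a quadratic constant of size $\eta KL/\mu$.

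\textbf{Your route.} You put $H(x_m)$ into the Taylor expansion. The remainder is still $\tfrac K2\|e_m\|_M^2$, since $\|M^{-1}(H(x^*+te_m)-H(x_m))\|_M\le K(1-t)\|e_m\|_M$. The exact projector $P_m$ then commutes with $A_m$, and $(I-2P_m)A_m$ is precisely the $M$-absolute value of $A_m$. So only eigen\emph{value} perturbation (Weyl) is needed, and the $x_m$-dependence of the subspace never enters.

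\textbf{What your route buys.}
\begin{itemize}
\item A shorter argument.
\item A $\kappa_M$-free quadratic constant of order $\eta K$.
\item No gap bookkeeping. Strictly, the standard $\sin\Theta$ bound involves the perturbed spectrum, so the paper's gap should be $2\mu-K\|e_m\|_M$ rather than $2\mu$. This is a harmless higher-order correction in the paper, but your argument sidesteps it entirely.
\end{itemize}

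\textbf{What the paper's route buys.}
\begin{itemize}
\item The linear factor is tied to a single operator at $x^*$, mirroring the Jacobian analysis of Theorem~\ref{thm:stability}.
\item The frame dependence is isolated in $\|\mathcal{R}_m-\mathcal{R}^*\|_M$. This transfers directly if the inexactness is measured against the unstable eigenspace at $x^*$ rather than at $x_m$. In your decomposition you would then need a subspace perturbation bound after all.
\end{itemize}

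Both arguments implicitly require the gap to persist, i.e.\ $K\|e_m\|_M<\mu$, so that $V_m$ is well defined; you correctly flag this.
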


\begin{proof}
Let $e_m = x_m - x^*$ denote the error at step $m$. We define the exact and computed reflection operators as $\mathcal{R}_m = I - 2V_m V_m^\top M$ and $\widehat{\mathcal{R}}_m = I - 2\widehat{V}_m \widehat{V}_m^\top M$, respectively.

Expanding the gradient near $x^*$, we write $\nabla E(x_m) = H(x^*) e_m + r_m$, where the remainder $r_m = \int_{0}^{1} \left (H(x^* + t e_m) - H(x^* ) \right )  e_m \, dt$ satisfies $\|M^{-1}r_m\|_M \le \frac{K}{2}\|e_m\|_M^2$ indicated by the Lipschitz condition in Assumption~\ref{ass:regularity}(a). 
Therefore, the update step $x_{m+1} = x_m - \eta \widehat{\mathcal{R}}_m M^{-1} \nabla E(x_m)$ yields the error recursion
\begin{equation*}
\label{eq:error-recursion-split}
\begin{aligned}
e_{m+1} &= \underbrace{\left( e_m - \eta \mathcal{R}_m M^{-1} (H(x^*)e_m + r_m) \right)}_{\mathrm{Term\ I}} + \underbrace{\eta (\mathcal{R}_m - \widehat{\mathcal{R}}_m) M^{-1} \nabla E(x_m)}_{\mathrm{Term\ II}}.
\end{aligned}
\end{equation*}

We first analyze $\mathrm{Term\ I}$ by isolating the linearized operator at equilibrium. Let $T^* := \mathcal{R}^* M^{-1} H(x^*)$, where $\mathcal{R}^*$ denotes the reflection operator constructed from the exact eigenvectors at $x^*$. We decompose this term as
\begin{equation*}
\mathrm{Term\ I} = (I - \eta T^*) e_m + \eta (\mathcal{R}^* - \mathcal{R}_m) M^{-1} H(x^*) e_m - \eta \mathcal{R}_m M^{-1} r_m.
\end{equation*}
Taking norms and observing that $\|\mathcal{R}_m\|_M = 1$ and $\|M^{-1}H(x^*)\|_M \le L$, we obtain
\begin{equation}
\label{eq:term1-bound-raw}
\|\mathrm{Term\ I}\|_M \le \|I - \eta T^*\|_M \|e_m\|_M + \eta L \|\mathcal{R}^* - \mathcal{R}_m\|_M \|e_m\|_M + \frac{\eta K}{2}\|e_m\|_M^2.
\end{equation}
We then estimate the components of~\eqref{eq:term1-bound-raw} as follows. 

First, we bound the operator norm $\|I - \eta T^*\|_M$. Let $\{(\lambda_j, u_j)\}_{j=1}^n$ be the $M$-orthonormal generalized eigenpairs of $(H(x^*), M)$. Since $M^{-1}H(x^*) u_j = \lambda_j u_j$, the exact reflection operator $\mathcal{R}^*$ flips the sign of $u_j$ for $j \le k$ (where $\lambda_j < 0$) and preserves it for $j > k$ (where $\lambda_j > 0$). Thus, the linearized operator $T^* := \mathcal{R}^* M^{-1} H(x^*)$ acts on the basis as
\begin{equation*}
T^* u_j = \mathcal{R}^* (\lambda_j u_j) = |\lambda_j| u_j, \quad j = 1, \dots, n.
\end{equation*}

To bound the induced $M$-operator norm of $I - \eta T^*$, we expand an arbitrary vector $x \in \mathbb{R}^n$ in this $M$-orthonormal basis as $x = \sum_{j=1}^n c_j u_j$. Applying the operator yields $(I - \eta T^*)x = \sum_{j=1}^n c_j (1 - \eta |\lambda_j|) u_j$. By the definition of the $M$-norm and the $M$-orthonormality of $\{u_j\}$, we have
\begin{equation*}
\|(I - \eta T^*)x\|_M^2 = \sum_{j=1}^n c_j^2 (1 - \eta |\lambda_j|)^2 \le \max_{1 \le j \le n} (1 - \eta |\lambda_j|)^2 \sum_{j=1}^n c_j^2 = \max_{1 \le j \le n} (1 - \eta |\lambda_j|)^2 \|x\|_M^2.
\end{equation*}
Taking the supremum over all $x \neq 0$ and substituting the step size $\eta = \frac{2}{L+\mu}$, we obtain
\begin{equation*}
\|I - \eta T^*\|_M \le \max_{\nu \in [\mu, L]} |1 - \eta \nu| \le \frac{L-\mu}{L+\mu} = \frac{\kappa_M - 1}{\kappa_M + 1}.
\end{equation*}

Second, we bound the subspace perturbation $\|\mathcal{R}_m - \mathcal{R}^*\|_M$.
Since $\mathcal{R}_m$ and $\mathcal{R}^*$ are reflections across the spaces spanned by $V_m$ and $U_*$ respectively, where $U_*$ consists of the generalized eigenvectors of $(H(x^*),M)$ corresponding to the unstable directions. Their difference in the $M$-norm is controlled by the canonical angles between these subspaces:
\begin{equation*}
\|\mathcal{R}_m - \mathcal{R}^*\|_M = 2 \|\sin\Theta(V_m, U_*)\|_M.
\end{equation*}

To apply standard perturbation theory, we consider the transformed Hessian $\tilde{H}(x) := M^{-1/2}H(x)M^{-1/2}$. The generalized eigenpairs of $(H(x), M)$ are in one-to-one correspondence with the standard eigenpairs of $\tilde{H}(x)$ via the mapping $v \mapsto M^{1/2}v$. Consequently, the angle between the subspaces in the $M$-metric is identical to the Euclidean angle between the transformed subspaces $\tilde{V}_m = M^{1/2}V_m$ and $\tilde{U}_* = M^{1/2}U_*$.
The matrix $\tilde{H}(x^*)$ has a spectral gap $\delta_{\text{gap}} \ge 2\mu$ separating its negative and positive eigenvalues. Applying the Davis--Kahan $\sin\Theta$ theorem~\cite{davis1970rotation} to the perturbation $\tilde{H}(x_m) - \tilde{H}(x^*)$, we obtain
\begin{equation*}
\|\sin\Theta(\tilde{V}_m, \tilde{U}_*)\|_2 \le \frac{\|\tilde{H}(x_m) - \tilde{H}(x^*)\|_2}{\delta_{\text{gap}}} \le \frac{\|M^{-1}(H(x_m) - H(x^*))\|_M}{2\mu}.
\end{equation*}
Using the Lipschitz continuity from Assumption~\ref{ass:regularity}(a), this simplifies to
\begin{equation*}
\|\mathcal{R}_m - \mathcal{R}^*\|_M \le 2 \left( \frac{K\|e_m\|_M}{2\mu} \right) = \frac{K}{\mu}\|e_m\|_M.
\end{equation*}
Substituting these estimates back into~\eqref{eq:term1-bound-raw}, we obtain
\begin{equation}
\label{eq:term1-final}
\|\mathrm{Term\ I}\|_M \le \frac{\kappa_M - 1}{\kappa_M + 1} \|e_m\|_M + \eta \left(\frac{KL}{\mu} + \frac{K}{2}\right) \|e_m\|_M^2.
\end{equation}

Next, we turn to $\mathrm{Term\ II}$, which accounts for the error arising from the inexact projection. Assumption~\ref{ass:regularity}(c) guarantees that $\|\mathcal{R}_m - \widehat{\mathcal{R}}_m\|_M \le 2\theta$. Furthermore, using the spectral bound $L$ and the remainder estimate derived earlier, the gradient term satisfies $\|M^{-1}\nabla E(x_m)\|_M \le L\|e_m\|_M + \frac{K}{2}\|e_m\|_M^2$. Combining these bounds yields
\begin{equation}
\label{eq:term2-final}
\|\mathrm{Term\ II}\|_M \le \eta (2\theta) \left( L\|e_m\|_M + \frac{K}{2}\|e_m\|_M^2 \right) = 2\eta L \theta \|e_m\|_M + \eta\theta K \|e_m\|_M^2.
\end{equation}

Combining~\eqref{eq:term1-final} and~\eqref{eq:term2-final} with $\eta=\frac{2}{L+\mu}$ gives the error recursion
\begin{equation}
\label{ineq:convergence}
\|e_{m+1}\|_M \le \left( \frac{\kappa_M - 1}{\kappa_M + 1} + \frac{4\kappa_M}{\kappa_M + 1}\theta \right) \|e_m\|_M + C \|e_m\|_M^2,
\end{equation}
where $C>0$ collects all quadratic constants. Let $q$ denote the linear rate factor:
\begin{equation}
q := \frac{\kappa_M - 1}{\kappa_M + 1} + \frac{4\kappa_M}{\kappa_M + 1}\theta.
\end{equation}
The condition $\theta < \frac{1}{2\kappa_M}$ implies $4\kappa_M\theta < 2$, and thus
\begin{equation*}
q < \frac{\kappa_M - 1 + 2}{\kappa_M + 1} = 1.
\end{equation*}
It remains to verify that the iterates stay in the neighborhood $U(x^*, \delta)$ where Assumptions apply. Choose $\delta_0 \in (0, \delta]$ sufficiently small such that $C\delta_0 \le \frac{1-q}{2}$.
We proceed by induction. Suppose $\|e_m\|_M \le \delta_0$. Then~\eqref{ineq:convergence} implies
\begin{equation*}
\|e_{m+1}\|_M \le q\|e_m\|_M + C\|e_m\|_M^2 = (q + C\|e_m\|_M)\|e_m\|_M.
\end{equation*}
Using the bound on $\delta_0$, we have $q + C\|e_m\|_M \le q + \frac{1-q}{2} = \frac{1+q}{2} < 1$. Thus,
\begin{equation*}
\|e_{m+1}\|_M \le \frac{1+q}{2} \|e_m\|_M \le \delta_0.
\end{equation*}
This confirms that $x_{m+1} \in U(x^*, \delta)$. By induction, if $\|e_0\|_M \le \delta_0$, the sequence remains bounded and converges linearly to $x^*$ with asymptotic rate $\frac{1+q}{2}$.
\end{proof}

Theorem~\ref{thm:convergence} makes clear how the local convergence rate depends on two distinct effects: the conditioning of the preconditioned Hessian and the inexactness of the computed unstable eigenspace. In the ideal case $\theta=0$, the contraction factor reduces to $q\approx\frac{\kappa_M - 1}{\kappa_M + 1}$, which is precisely the preconditioned analogue of the Euclidean HiSD rate. More generally, the term proportional to $\theta$ quantifies the deterioration caused by terminating the inner eigensolver after finitely many steps.

Although Theorem~\ref{thm:convergence} assumes a fixed SPD metric $M$, the result naturally extends to a varying metric $M_m := M(x_m)$ provided $M(x)$ is Lipschitz continuous and uniformly SPD. In that case, the analysis can be performed in the frozen norm $\| \cdot \|_{M(x^*)}$, and the variation of the metric contributes only higher-order terms to the recurrence, preserving the local linear convergence rate.

The local linear estimate also yields the standard complexity bound. Ignoring the higher-order term, the effective contraction factor is $q \approx \frac{\kappa_M-1}{\kappa_M+1}$. Therefore, to achieve $\|x_m-x^*\|_M \le \epsilon \|x_0-x^*\|_M$, it suffices to take $m \ge \frac{\log(1/\epsilon)}{\log(1/q)}$. For $\kappa_M\gg 1$, we have $\log(1/q)\approx 2/\kappa_M$, and hence $m = O\!\left(\kappa_M \log\frac{1}{\epsilon}\right)$. Thus, the discrete p-HiSD iteration retains the same complexity form as the Euclidean scheme, but with the original condition number replaced by the preconditioned one.

\section{Preconditioner Design}
\label{sec:preconditioners}

The convergence theory in Sections~\ref{sec:stability}--\ref{sec:discrete} shows that the performance of p-HiSD depends on how effectively the preconditioner improves the conditioning of the generalized Hessian while remaining computationally manageable. In practice, a useful preconditioner should satisfy three basic requirements: it should reduce the effective condition number $\kappa_M$, remain symmetric positive definite throughout the iteration, and admit an application cost commensurate with the problem scale. In this section, we discuss several representative choices illustrating different trade-offs between spectral accuracy, robustness, and computational cost.

\subsection{Spectral-type preconditioners}

The most direct strategy for improving conditioning is to construct $M$ using the exact spectral information of the Hessian. Assuming the eigendecomposition $H(x)=Q\Lambda Q^\top$ with $\Lambda=\diag(\lambda_1,\ldots,\lambda_n)$, a natural formulation is the spectral preconditioner:
\begin{equation}
\label{eq:spectral-precond}
M_{\mathrm{spec}}(x) = Q \diag(|\lambda_1|+\varepsilon,\ldots,|\lambda_n|+\varepsilon) Q^\top,
\end{equation}
where $\varepsilon>0$ serves as a small regularization parameter. By construction, $M_{\mathrm{spec}}(x)$ is SPD, shares eigenvectors with $H$,
and has generalized eigenvalues
$\sigma_i=\lambda_i/(|\lambda_i|+\varepsilon)$. If
$\mu\le |\lambda_i|\le L$, then
\begin{equation*}
\kappa_{M_{\mathrm{spec}}}
\le \frac{L(\mu+\varepsilon)}{\mu(L+\varepsilon)}
= \frac{1+\varepsilon/\mu}{1+\varepsilon/L}
\to 1 \quad \text{as } \varepsilon\to0^+.
\end{equation*}
Because this bound approaches $1$ as $\varepsilon\to0^+$, the spectral preconditioner is near-optimal in terms of conditioning.

However, this theoretical ideal is fundamentally limited by its computational cost. Recomputing a full eigendecomposition at every step is prohibitive except for small dense problems. To circumvent this bottleneck, a practically useful variant is the frozen spectral preconditioner, in which $M_{\mathrm{spec}}(x_0)$ or a similarly chosen reference matrix is assembled once and then reused over multiple outer iterations. Although this sacrifices exact spectral matching, it typically preserves much of the conditioning benefit while greatly reducing setup cost.

A related geometric challenge arises when the Hessian eigenvectors rotate rapidly, for example near clustered eigenvalues or eigenvalue crossings. In such situations, rigidly updating the spectral frame can lead to discontinuous updates in the preconditioner. To reduce this sensitivity, we introduce a subspace-inertial variant inspired by LOBPCG~\cite{knyazev2001toward} and spectral deflation~\cite{saad2011numerical}. Let $\{(\lambda_i,\tilde v_i)\}_{i=1}^k$ be the $k$ smallest eigenpairs of $H(x)$, and let $V^{\mathrm{old}}=[v_1^{\mathrm{old}},\dots,v_k^{\mathrm{old}}]$ denote the frame from the previous iteration. We first form intermediate vectors:
\begin{equation}
\label{eq:inertial_update}
w_i=(1-\alpha)\sigma_i\tilde v_i+\alpha v_i^{\mathrm{old}},
\qquad
\sigma_i=\operatorname{sign}(\langle \tilde v_i,v_i^{\mathrm{old}}\rangle).
\end{equation}
Here, $\alpha\in[0,1)$ is an inertia parameter. We then orthonormalize these intermediate vectors to obtain the stabilized frame $V=[v_1,\dots,v_k]$. Using this frame, we define the subspace-inertial preconditioner as
\begin{equation}
\label{eq:SI-precond}
M_{\mathrm{SI}}(x) = \mu_{\mathrm{rest}}I+\sum_{i=1}^k(\mu_i-\mu_{\mathrm{rest}})v_i v_i^\top,\end{equation}where $\mu_i=a_i|\lambda_i|+\varepsilon$ and $\mu_{\mathrm{rest}}=\beta|\lambda_{k+1}|+\varepsilon$. The role of the inertial mixing is to gracefully suppress spurious frame rotations while retaining the essential spectral scaling effect. This construction is particularly robust when the unstable eigenspace is more reliable than its individual constituent eigenvectors.

\subsection{Exploiting algebraic structure}

For medium- and large-scale problems, fully spectral constructions are often too expensive. In this regime, it is often necessary to deliberately discard exact spectral fidelity, turning instead to the algebraic structure of the Hessian to maintain computational tractability.

When exploiting structure, the simplest choice is scalar Jacobi scaling:
\begin{equation}
\label{eq:jacobi-precond}
M_{\mathrm{Jac}}(x)
=
\diag(|H_{11}(x)|+\varepsilon,\ldots,|H_{nn}(x)|+\varepsilon).
\end{equation}
This preconditioner is exceptionally cheap to assemble and apply, requiring only $O(n)$ storage and work, making it highly effective when the Hessian is close to diagonally dominant. However, its limitation is equally clear: because it rescales solely along individual coordinate directions, it is blind to strong off-diagonal couplings and cannot adequately capture rotated stiff modes.

To naturally address this limitation in problems with a distinct multi-component or coupled PDE architecture, a block generalization is more appropriate. By partitioning $H(x)$ into blocks $H_{ij}\in\mathbb{R}^{n_i\times n_j}$ such that $\sum_i n_i=n$, we define the block Jacobi preconditioner:
\begin{equation}
\label{eq:block-jacobi-precond}
M_{\mathrm{BJac}}(x)
=
\operatorname{blockdiag}(|H_{11}|+\varepsilon I_{n_1},\ldots,|H_{pp}|+\varepsilon I_{n_p}),
\end{equation}
where $|A|=(A^\top A)^{1/2}$ denotes the matrix absolute value. Directly compensating for the geometric information discarded by scalar scaling, this construction captures local anisotropy within each block and thus better approximates local stiff directions. Its overall effectiveness hinges on the assumption that intra-block dynamics outweigh inter-block couplings; otherwise, crucial global geometry is still lost.

For PDE discretizations dominated by elliptic stiffness, a natural choice is the
Laplacian preconditioner. In one dimension, the discretized negative Laplacian is
$A_h=\frac{1}{h^2}\operatorname{tridiag}(-1,2,-1)$.
We define
\begin{equation}
\label{eq:laplacian-precond}
M_{\mathrm{Lap}}
=
A_h+\alpha I,
\qquad \alpha>0 .
\end{equation}
This construction captures the leading mesh-dependent stiffness, since the
largest eigenvalues of $A_h$ typically scale like $O(h^{-2})$, while remaining
sparse and easy to factorize. It is particularly effective when the
ill-conditioning mainly comes from the Laplacian term. For semilinear problems,
a reaction-enhanced variant is
\begin{equation}
\label{eq:laplacian-reaction-precond}
M_{\mathrm{LapR}}(x)
=
A_h+\operatorname{diag}(|d_h(x)|)+\varepsilon I,
\qquad \varepsilon>0 .
\end{equation}
Here $d_h(x)$ denotes the diagonal reaction contribution in the Hessian.

For sparse Hessians arising from PDE discretizations, incomplete factorization emerges as an effective strategy. A shifted incomplete Cholesky preconditioner is obtained by first selecting $\delta>0$ so that
$H_\delta(x):=H(x)+\delta I \succ 0,$
and then computing an incomplete Cholesky factorization
\begin{equation}
\label{eq:mic_def}
M_{\mathrm{IC}}(x)=\tilde L\tilde L^\top \approx H_\delta(x)
\end{equation}
under a prescribed sparsity pattern or drop tolerance. The shift guarantees positive definiteness even near indefinite saddle points, while the factorization preserves sparsity and yields an application cost proportional to $\mathrm{nnz}(\tilde L)$, where $\mathrm{nnz}(\cdot)$ denotes the number of non-zero elements. For large sparse systems, this often strikes a good balance between retaining curvature information and maintaining computational tractability. In small dense settings, this approach naturally reduces to a full shifted Cholesky factorization.

\subsection{Practical selection guidelines}

The preceding discussion establishes a clear hierarchy for selecting $M$, grounded in the fundamental trade-off between spectral accuracy and computational overhead. Rather than isolated alternatives, these choices represent strategic points along a spectrum from high-fidelity spectral reconstructions to low-cost algebraic approximations.

Table~\ref{tab:precond_selection} summarizes the implementation guidelines based on the problem scale $n$ and sparsity $\rho = \mathrm{nnz}(H)/n^2$. The appropriate choice depends on the trade-off between spectral fidelity and application cost: for small dense systems, spectral information is affordable and beneficial; for larger or sparse problems, structured algebraic preconditioners are more practical.
\begin{table}[htbp]
\centering
\caption{Practical selection guidelines for the p-HiSD preconditioner.}
\label{tab:precond_selection}
\small
\begin{tabular}{ll}
\hline
\textbf{Heuristic Criteria} & \textbf{Recommended Preconditioner $M$} \\ \hline
$n \le 200$ (Small \& Dense) & Spectral / Frozen Spectral \\
$n \le 1000$ (Block structured) & Block Jacobi \\
$\rho < 0.01$ (Sparse) & Shifted Incomplete Cholesky \\
General / Large-scale & Jacobi Scaling (Baseline) \\ \hline
Laplacian-dominated PDE discretization & Laplacian / Reaction-enhanced Laplacian \\
\hline
\end{tabular}
\end{table}

\section{Numerical Experiments}
\label{sec:numerical}

In this section, we assess p-HiSD on a sequence of numerical experiments.
Unless stated otherwise, all methods share the same initial state and stopping tolerance; initial frames are the lowest Hessian eigenvectors for HiSD and the lowest generalized eigenvectors of $(H,M)$ for p-HiSD.
\subsection{Quadratic model: rate verification}
\label{subsec:rate_verification}

We first verify the local convergence rate predicted by Theorem~\ref{thm:convergence} by computing the index-1 saddle point at the origin for the quadratic model
\begin{equation*}\label{eq:quadratic_model}
E(\mathbf{x}) = \tfrac{1}{2}\mathbf{x}^\top H \mathbf{x}, \quad H = \diag(-1, 2, 3, \dots, 100).
\end{equation*}
This yields $n=100$ and spectral bounds $\mu=1$ and $L=100$. We compare standard HiSD, for which $\kappa=100$, with p-HiSD using two target preconditioned condition numbers $\kappa_M \in \{2.0, 1.01\}$. For p-HiSD, we construct a diagonal preconditioner $M_{ii} = |\lambda_i| + \varepsilon$, with $\varepsilon$ chosen so as to realize the prescribed value of $\kappa_M$.

The state step size is chosen as $\eta = 2/(L_M + \mu_M)$ in accordance with Theorem~\ref{thm:convergence}, where $\mu_M$ and $L_M$ are the effective bounds of the generalized eigenvalue problem. Since both $H$ and $M$ are diagonal, initializing the frame with the exact unstable direction $v_1 = e_1$ keeps it fixed throughout the iteration. This isolates the state convergence by eliminating frame approximation errors ($\theta=0$). To exhibit the theoretical worst-case linear rate, all runs are initialized from $\mathbf{x}_0 \propto e_1 + e_{100}$, normalized so that $\|\nabla E(\mathbf{x}_0)\| = 1.0$, and are terminated when $\|\nabla E(\mathbf{x}_m)\| < 10^{-8}$.

\begin{figure}[htbp]
    \centering
    \includegraphics[width=0.55\textwidth]{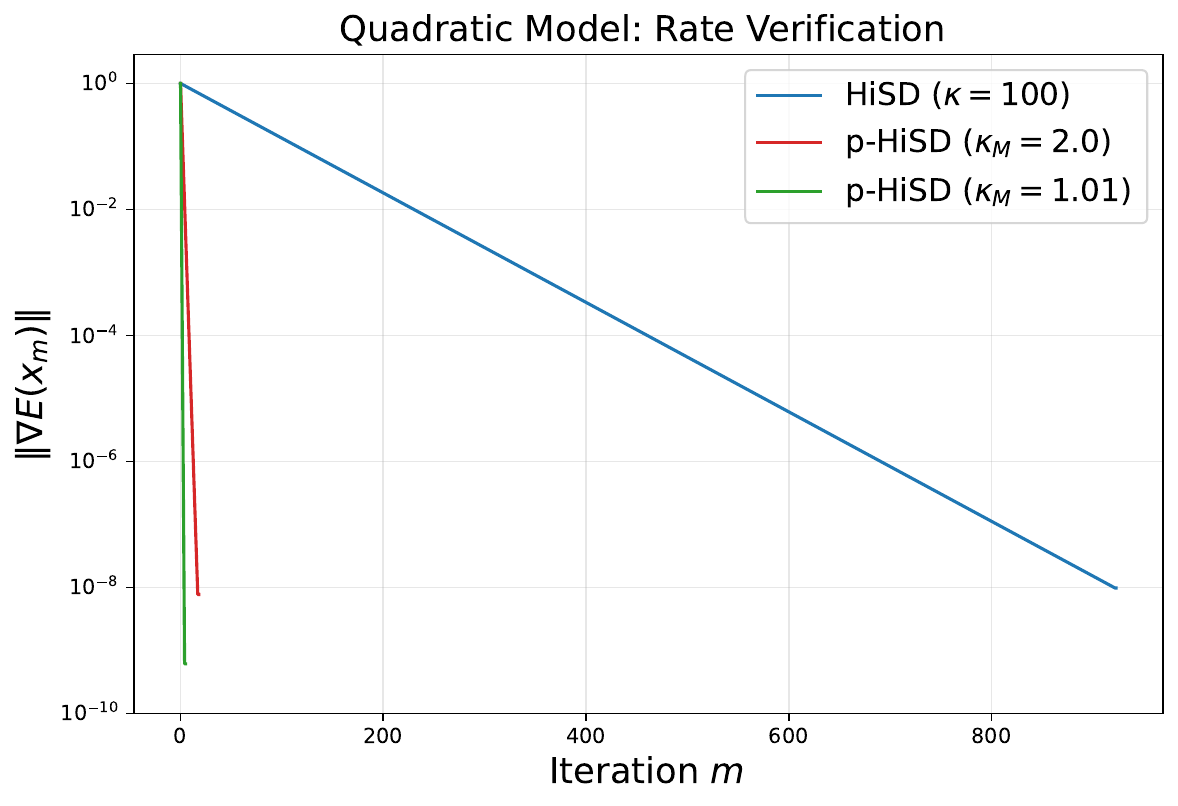}
    \caption{Verification of convergence rate on a quadratic model. The observed rates closely match the theoretical prediction $q = (\kappa_M-1)/(\kappa_M+1)$, showing that preconditioning accelerates convergence by reducing the effective condition number.}
    \label{fig:1}
\end{figure}

The results are shown in Figure~\ref{fig:1}. Standard HiSD requires 923 iterations, with an observed linear rate of approximately $0.980$, in agreement with the factor $(\kappa-1)/(\kappa+1)$. In contrast, p-HiSD with $\kappa_M=2.0$ and $\kappa_M=1.01$ exhibits rates of $0.333$ and $0.005$, converging in only 18 and 5 iterations, respectively. These observations agree closely with the prediction of Theorem~\ref{thm:convergence} and are consistent with the $O(\kappa_M\log(1/\epsilon))$ iteration complexity predicted by the local analysis. 
\subsection{Two-dimensional test problems}
\label{subsec:two_dimensional_tests}
We next use two two-dimensional nonconvex landscapes to illustrate the geometric effect of preconditioning.
\subsubsection{Butterfly function: improved geometric robustness}
We next consider saddle search on the two-dimensional butterfly function \cite{Su2025improved}
\begin{equation*}\label{eq:butterfly}
E(x,y)=x^4-2x^2+y^4+y^2-\tfrac{3}{2}x^2y^2+x^2y-cy^3,
\end{equation*}
with parameter $c=1$.  This landscape features both local minimizers and index-1 saddle points, connected by prominent ridge and valley structures. Such geometry is challenging for standard HiSD: trajectories started near a minimizer often drift away, escaping along energy ridges or missing intermediate saddle points due to misalignment between the computed unstable direction and the true unstable manifold.

To improve this alignment, we apply p-HiSD with two metric choices: the spectral preconditioner \eqref{eq:spectral-precond} and the subspace-inertial preconditioner \eqref{eq:SI-precond}. Since the model is only two-dimensional, we use the exact Hessian at every step, without freezing, so that the comparison isolates the geometric effect of preconditioning from approximation errors in the metric.

We search for the index-$1$ saddle point starting from $(1.44,-0.95)$, a point near the local minimizer. All runs utilize $\eta=0.01$ for the state update, $\tau=0.05$ for the frame update, and $J=3$ inner iterations per outer step. Both p-HiSD variants use regularization parameter $\varepsilon=10^{-2}$, and the subspace-inertial method uses inertia coefficient $\alpha=0.7$ and weight $a_1=0.49$. The stopping criterion is $\|\nabla E(x_m,y_m)\|<10^{-6}$ together with the requirement that the final Hessian have exactly one negative eigenvalue. For visualization, the spectral-preconditioned trajectory is displayed from the symmetric point $(-1.44,-0.95)$.

\begin{figure}[htbp]
    \centering
    \includegraphics[width=0.55\textwidth]{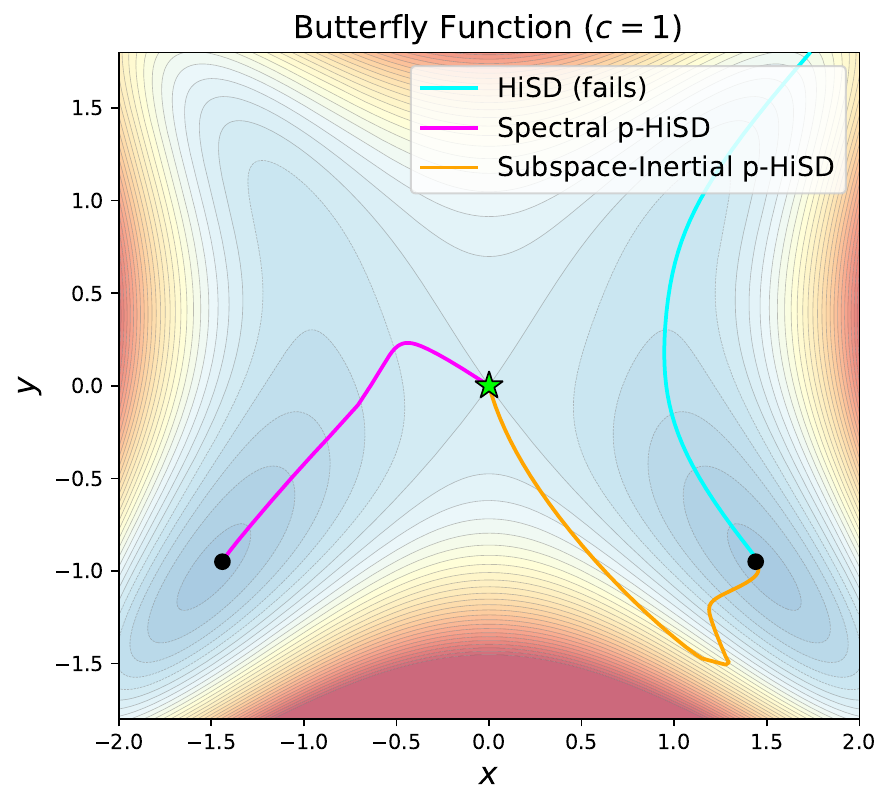}
    \caption{Butterfly function ($c=1$). Starting near a local minimizer, standard HiSD (\textcolor{cyan}{cyan}) fails to reach the target saddle point (\textcolor{green}{green}), whereas p-HiSD with the spectral preconditioner (\textcolor{magenta}{magenta}) and the subspace-inertial preconditioner (\textcolor{orange}{orange}) successfully converges to it.}
    \label{fig:2.1}
\end{figure}

Figure~\ref{fig:2.1} shows the resulting trajectories. The standard HiSD trajectory drifts away from the saddle region and fails to reach the target saddle from this initialization, consistent with the sensitivity discussed in~\cite{Su2025improved}. In contrast, both preconditioned variants converge successfully to the target index-$1$ saddle. This example illustrates that, even in a low-dimensional setting, changing the metric can substantially improve the directional quality of the search and enlarge the basin from which the desired saddle can be reached.

\subsubsection{Modified M\"uller--Brown potential: target-saddle hit statistics}

Following the perturbed benchmark in~\cite{bonfanti2017methods}, we measure
how often the dynamics reaches a prescribed target saddle on the modified
M\"uller--Brown potential
\begin{equation*}
E(x,y)=E_{\rm MB}(x,y)
+
500\sin(xy)\exp\{-0.1(x+0.5582)^2-0.1(y-1.4417)^2\},
\end{equation*}
where $E_{\rm MB}$ denotes the classical M\"uller--Brown potential with the
standard parameters specified in~\cite{bonfanti2017methods}. The localized
perturbation introduces additional curvature variation and provides a useful
test of metric-dependent basin preference.

We compare standard HiSD with p-HiSD equipped with the subspace-inertial
preconditioner \eqref{eq:SI-precond}, abbreviated as SI p-HiSD. The prescribed
index-$1$ saddle is $x^\ast=(0.0660189287,\,0.1840409406)$. We sample $500$
initial points uniformly from $[-0.8,-0.5]\times[1.2,1.5]$, using the same
ensemble for both methods. Both methods use $\tau=10^{-3}$ and $J=5$ inner iterations; the state step sizes are $\eta=10^{-4}$ for HiSD and
$\eta=5\times10^{-2}$ for SI p-HiSD, with the smaller HiSD step chosen for stability.

A run is counted as a target hit if the final iterate satisfies
$\|\nabla E(x_m,y_m)\|<10^{-6}$, lies within distance $10^{-2}$ of $x^\ast$,
and has Morse index one. The index-one condition is checked by
$\lambda_1<-10^{-3}$ and $\lambda_2>10^{-3}$. A converged index-$1$ endpoint
outside the target radius is classified as another index-$1$ saddle.

\begin{table}[htbp]
\centering
\label{tab:2}
\begin{tabular}{c|ccc}
\hline
Method & Target saddle & Other index-$1$ saddle & Other outcome \\
\hline
HiSD & 3/500 (0.6\%) & 497/500 (99.4\%) & 0/500 (0.0\%) \\
SI p-HiSD & 441/500 (88.2\%) & 9/500 (1.8\%) & 50/500 (10.0\%) \\
\hline
\end{tabular}
\caption{Endpoint statistics for the modified M\"uller--Brown potential over
$500$ identical initial conditions. The table reports target-saddle hit
statistics rather than convergence to an arbitrary index-$1$ saddle.}
\end{table}

Table~\ref{tab:2} shows that standard HiSD reliably converges to an index-$1$
saddle, but its trajectories are overwhelmingly attracted to a non-target saddle:
only $3$ out of $500$ runs reach the prescribed saddle. In contrast, SI p-HiSD
reverses this basin preference, reaching the target saddle in $441$ out of
$500$ runs while converging to another index-$1$ saddle in only $9$ runs. Thus,
the target hit rate increases from $0.6\%$ to $88.2\%$, demonstrating that the
preconditioned metric improves target-saddle selectivity.

\subsection{Modified Rosenbrock problem: index-5 saddle search}
\label{subsec:modified_rosenbrock}
Following \cite{Luo2022Convergence}, we consider a modified Rosenbrock
benchmark with a known index-$5$ saddle to test whether the $M$-orthogonal
frame formulation remains effective in a high-dimensional, strongly
anisotropic non-PDE landscape. The energy is
\begin{equation*}
E(x)
=
\sum_{i=1}^{d-1}
\left[
100(x_{i+1}-x_i^2)^2+(1-x_i)^2
\right]
+
\sum_{i=1}^{d}
s_i \bigl(\arctan(x_i-1)\bigr)^2,
\qquad x\in\mathbb{R}^d .
\end{equation*}
Here $d=1000$, with $s_i=-5\times 10^4$ for $1\le i\le 5$ and
$s_i=1$ otherwise. The target point is $x^*=(1,\ldots,1)$. Both terms are stationary at $x^\ast$, and a direct Hessian computation gives
exactly five negative eigenvalues, so $x^\ast$ is a known index-$5$ saddle.

We compare standard HiSD with three p-HiSD variants, using frozen preconditioners
constructed at the initial point: the spectral~\eqref{eq:spectral-precond}, Jacobi~\eqref{eq:jacobi-precond}, and block Jacobi
preconditioners~\eqref{eq:block-jacobi-precond}. The block Jacobi preconditioner uses blocks of size $20$.

All methods start from the same initial point
$x_0=x^\ast+0.1\,\xi/\|\xi\|_2$, where $\xi$ is a standard Gaussian random
vector generated with a fixed seed. We set $\tau=10^{-5}$ and use $J=5$ inner
frame updates per outer iteration for all methods. The p-HiSD variants use
$\eta=0.2$, whereas standard HiSD uses $\eta=10^{-5}$, since increasing the
standard HiSD step size to $\eta=10^{-4}$ leads to rapid divergence. The iteration budget is $10^4$, and a run is counted as converged if  $\|\nabla E(x_m)\|_2<10^{-6}$ and $\|x_m-x^\ast\|_2<10^{-2}$.

\begin{figure}[htbp]
    \centering
    \includegraphics[width=0.55\textwidth]{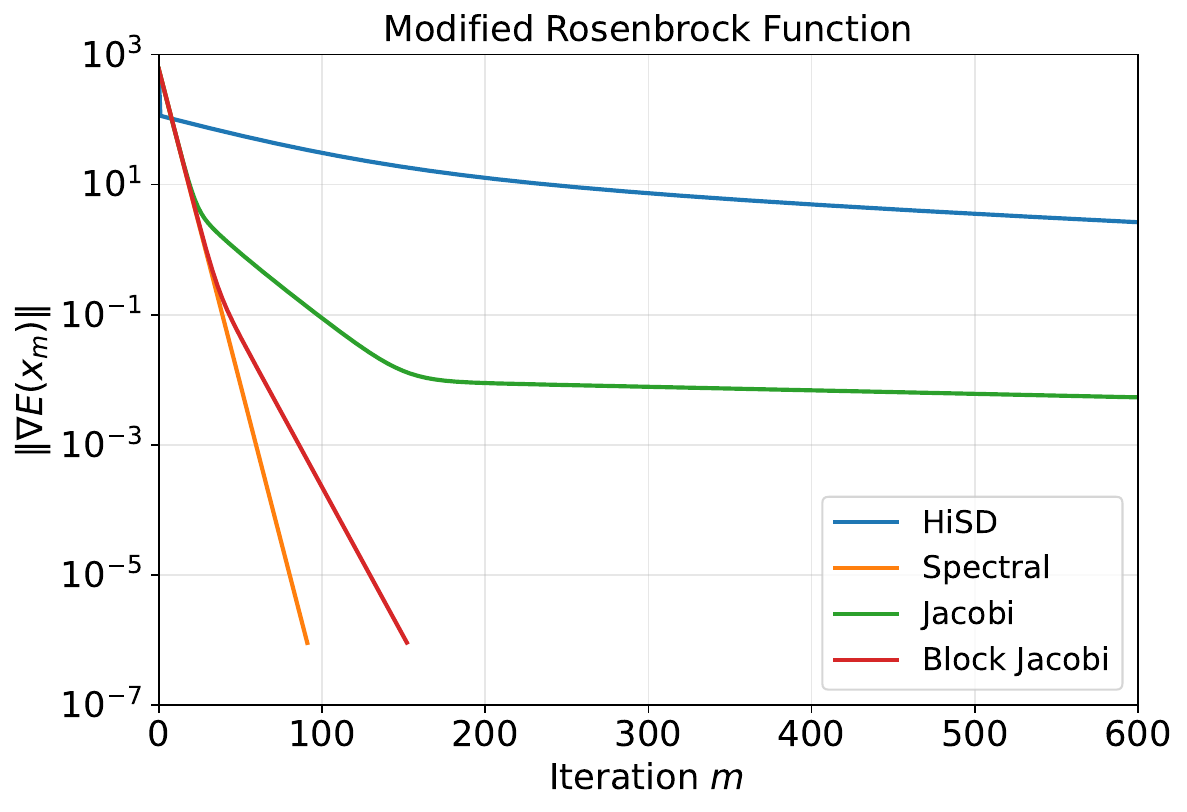}
    \caption{Residual histories for the modified Rosenbrock problem. Only the first
    $600$ iterations are shown; stopping statistics use a $10^4$-iteration budget.}     
    \label{fig:3}
\end{figure}

Figure~\ref{fig:3} shows the residual histories for this fixed initial
perturbation of the known saddle. Standard HiSD does not reach the
stopping tolerance within the $10^4$-iteration budget and attains a final
residual of $1.49\times 10^{-3}$. In contrast, the frozen spectral and
block Jacobi p-HiSD variants converge in 92 and 153 iterations,
respectively. The scalar Jacobi metric, used here as a low-cost diagonal
baseline, also reaches the stopping tolerance, but only after 7506
iterations. The final Hessian index is verified to be five for all
converged p-HiSD runs. These results highlight the effect of preconditioning in this anisotropic
high-index test: all p-HiSD variants reach the prescribed tolerance, whereas
standard HiSD remains above it within the $10^4$-iteration budget. The spectral
and block Jacobi metrics yield the largest reductions in iteration count, while
the scalar Jacobi metric, although slower, still gives a clear improvement over
standard HiSD as a low-cost baseline.

\subsection{Stiff coupled bistable chain}
\label{subsec:diatomic_chain}

We next consider a one-dimensional lattice model consisting of $N$ coupled bistable units $\{(u_i,v_i)\}_{i=1}^N$, where $u_i,v_i\in\mathbb{R}$ denote the internal state variables associated with the $i$-th lattice site. The global state is $\mathbf{x}=[u_1, v_1, \dots, u_N, v_N]^\top \in \mathbb{R}^{n}$ with $n=2N$, and the energy is given by
\begin{equation*}\label{eq:diatomic_energy}
E(\mathbf{x}) = \underbrace{\sum_{i=1}^N \frac{K}{2}(u_i-v_i)^2}_{\text{stiff internal synchronization}}
+ \underbrace{\sum_{i=1}^N \left[(u_i^2-1)^2+(v_i^2-1)^2\right]}_{\text{local bistability}}
+ \underbrace{\sum_{i=1}^{N-1} \frac{\delta}{2}(v_i-u_{i+1})^2}_{\text{weak neighbor coupling}}.
\end{equation*}
When $K\gg\delta$, the Hessian spectrum separates into large eigenvalues from the stiff internal modes $(u_i-v_i)$ and smaller ones from collective modes, making this a natural preconditioning test.

We compare standard HiSD with p-HiSD using block Jacobi \eqref{eq:block-jacobi-precond}, incomplete Cholesky \eqref{eq:mic_def}, and frozen spectral preconditioning based on \eqref{eq:spectral-precond}. We take $N=50$ ($n=100$), $K=10^4$, and $\delta=1$; the block Jacobi metric uses the $50$ local $2\times2$ pairs $(u_i,v_i)$. The index-$1$ search starts from $u_i=v_i=(-1)^i$ perturbed by Gaussian noise of standard deviation $0.1$.

The stiffness parameter $K$ restricts the state step size of standard HiSD: we use $\eta=5\times 10^{-5}$, since increasing it to $5\times 10^{-4}$ leads to divergence in this test. In contrast, all p-HiSD variants use $\eta=0.5$ and remain stable, indicating that the preconditioners rescale the stiff part of the spectrum.

\begin{figure}[htbp]
\centering
\includegraphics[width=0.55\textwidth]{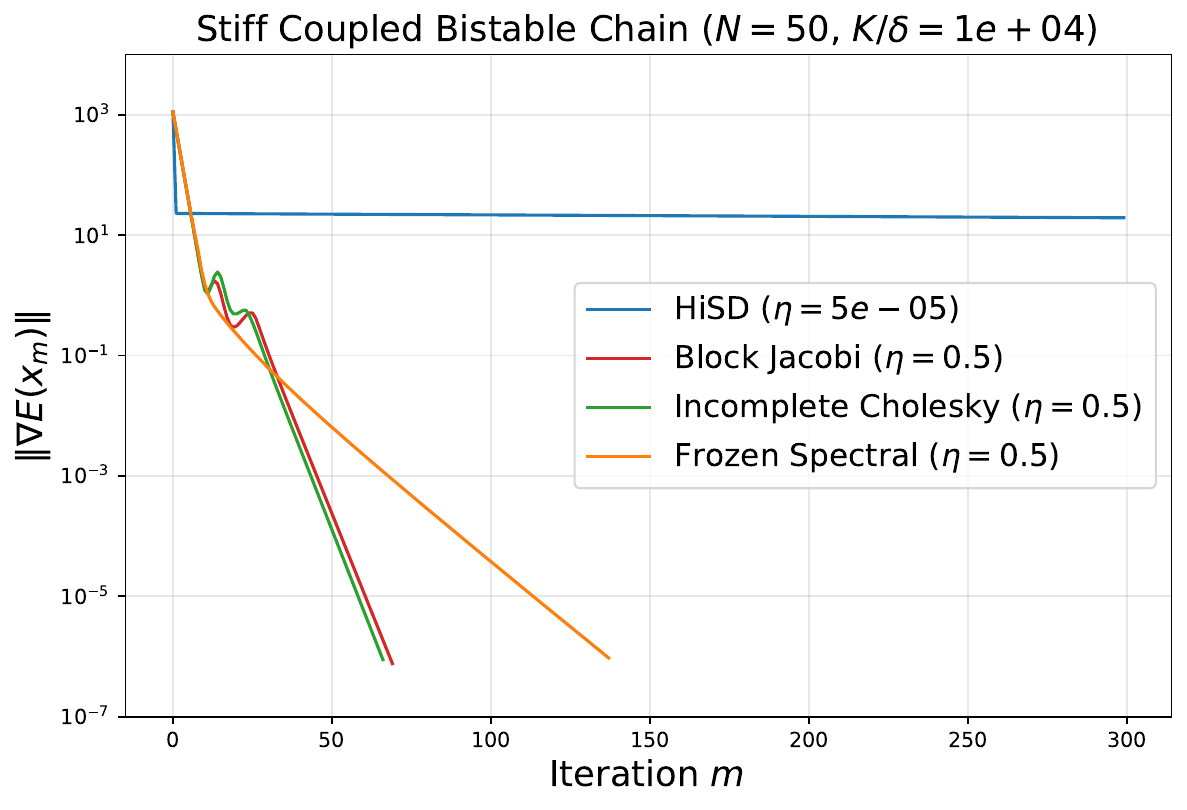}
\caption{Convergence histories of $\|\nabla E(\mathbf{x}_m)\|$ for the coupled bistable chain. Only the first $300$ iterations are plotted; the stopping statistics reported in the text use a $2000$-iteration budget.}
\label{fig:4}
\end{figure}
Figure~\ref{fig:4} shows that standard HiSD stagnates under its restrictive step size, with final gradient norm $9.19$ after the $2000$-iteration budget. In contrast, incomplete Cholesky, block Jacobi, and frozen spectral p-HiSD converge in $67$, $70$, and $138$ iterations, respectively, all reaching residuals below $10^{-6}$. These results show that resolving the local stiffness is essential for efficient saddle search in this multiscale lattice model.

The same example also illustrates the practical selection guideline from Table~\ref{tab:precond_selection}. Using $\eta=0.5$ throughout, we tested three problem sizes:
\begin{center}
\small
\setlength{\tabcolsep}{8pt}
\begin{tabular}{@{}cclcc@{}}
\toprule
Sites ($N$) & DoF ($n$) & Selected Method & Iterations & Time (s) \\
\midrule
50  & 100  & Frozen Spectral     & 138 & 0.031 \\
200 & 400  & Block Jacobi        & 62  & 0.078 \\
501 & 1002 & Incomplete Cholesky & 49 & 0.096 \\
\bottomrule
\end{tabular}
\end{center}

For the smallest system, frozen spectral preconditioning remains affordable and effective. At intermediate scale, block Jacobi offers a better balance between setup and iteration cost. For the largest sparse problem, incomplete Cholesky becomes the most effective choice. Together, these results support the practical usefulness of the selection guideline.

\subsection{Laplacian-dominated PDE discretizations}
\label{subsec:laplacian_dominated_pde}
We next consider three PDE discretizations whose Hessians are dominated by Laplacian or elliptic stiffness. In each case, ill-conditioning is driven by the mesh-dependent spectrum of the underlying differential operator.

\subsubsection{One-dimensional semilinear elliptic problem}
\label{subsec:semilinear_elliptic}

The first example is a one-dimensional semilinear elliptic problem on $\Omega=(0,\pi)$,
\begin{equation*}
    u_{xx}+u^4-10u^2=0,\qquad x\in(0,\pi),\qquad u(0)=u(\pi)=0.
\end{equation*}
This equation is the Euler--Lagrange equation of
\begin{equation*}
    E(u)=\int_0^\pi \left(\frac12 |u_x|^2-F(u)\right)\,dx,
    \qquad F'(u)=u^4-10u^2.
\end{equation*}
Using second-order finite differences with homogeneous Dirichlet boundary
conditions, the finite-dimensional gradient is
$\nabla E_h(u)=A_hu-(u^4-10u^2)$, where $A_h$ is the positive Dirichlet
Laplacian and the nonlinear term is applied componentwise. We search for an index-$3$ saddle in this stiff discretized
problem.

We compare standard HiSD with four p-HiSD variants using the spectral
preconditioner \eqref{eq:spectral-precond}, the block Jacobi preconditioner
\eqref{eq:block-jacobi-precond} with block size $64$, the shifted incomplete
Cholesky preconditioner \eqref{eq:mic_def}, and the reaction-enhanced
Laplacian preconditioner \eqref{eq:laplacian-reaction-precond}. For the reaction-enhanced Laplacian metric, we use the frozen form
$M_{\mathrm{LapR}}(u)=A_h+\operatorname{diag}(|4u^3-20u|)+\varepsilon I$.

For the numerical experiment, we use $N=400$ interior grid points and initialize
the iteration from $u_0(x)=\sin(4x)$. For standard HiSD, we take
$\eta=10^{-5}$, $\tau=0.01$, and $J=5$, since $\eta=10^{-4}$ is unstable for HiSD in this test. For p-HiSD, all variants use $\tau=0.01$, $J=5$, and initial state step size
$\eta=0.25$, with preconditioners first constructed at $u_0$. Once the one-dimensional discrete $L^2$ residual $\|\nabla E_h(u_m)\|_h:=\sqrt{h}\|\nabla E_h(u_m)\|_2$ falls below $3\times10^{-1}$, we rebuild the preconditioner once; after this re-freezing,
we use $\eta=1.5$ for the spectral, shifted incomplete Cholesky, and
reaction-enhanced Laplacian metrics, while the block Jacobi metric uses the
smaller step size $\eta=0.9$ to maintain stability. The iteration is terminated when $\|\nabla E_h(u_m)\|_h < 10^{-6}$.
    
\begin{figure}[htbp]
    \centering
    \includegraphics[width=0.55\textwidth]{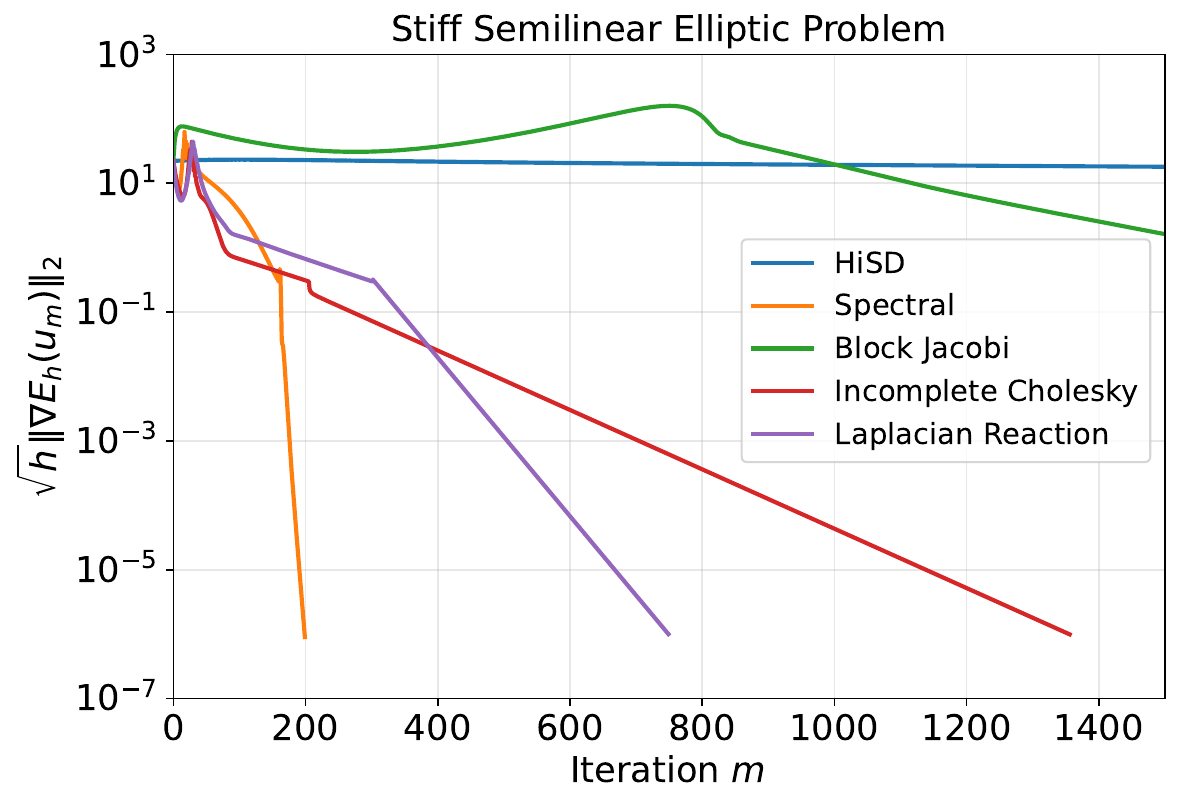}
    \caption{Residual histories for the elliptic problem in the discrete $L^2$
    norm $\|\nabla E_h(u_m)\|_h$. Only the first $1500$ iterations are shown;
    stopping statistics use a $6000$-iteration budget.}
    \label{fig:5.1}
\end{figure}

Figure~\ref{fig:5.1} shows that standard HiSD is restricted by the
elliptic stiffness and remains at a residual of order $10^1$ after the
$6000$-iteration budget. In contrast, all p-HiSD variants converge to an
index-$3$ saddle. The spectral, reaction-enhanced Laplacian, shifted
incomplete Cholesky, and block Jacobi preconditioners require 199, 749,
1356, and 5003 iterations, respectively. These results show that, for this one-dimensional elliptic discretization,
p-HiSD can use much larger stable state step sizes than standard HiSD and reach
the prescribed tolerance with substantially fewer outer iterations. The
one-time re-freezing is an implementation choice in this experiment and is not
part of the fixed-metric convergence theory.

\subsubsection{Two-dimensional Lane--Emden-type elliptic equation}
\label{subsec:lane_emden}

The second example is a two-dimensional Lane--Emden-type equation on
$\Omega=(0,\pi)^2$,
\begin{equation*}
\Delta u + u^p = 0 \quad \text{in } \Omega,
\qquad
u=0 \quad \text{on } \partial\Omega,
\end{equation*}
where $p\in\{3,5\}$. Its solutions are critical points of the energy
\begin{equation*}
E(u)=\int_\Omega \left(\frac12 |\nabla u|^2-\frac{1}{p+1}u^{p+1}\right)\,dx .
\end{equation*}
With $N$ interior grid points in each coordinate direction, the finite-difference
discretization yields $u\in\mathbb{R}^{N^2}$ and
$\nabla E_h(u)=A_hu-u^p$, where $A_h$ is the Dirichlet discrete negative
Laplacian and $u^p$ is applied componentwise. For both $p=3$ and $p=5$,
we target an index-$1$ saddle.

We use this example for two tests: a fixed-grid comparison between standard HiSD
and Laplacian-preconditioned p-HiSD, and a mesh-refinement test for p-HiSD.
Both tests start from the nodal values of $u_0(x,y)=1.2\sin x\sin y$.
The p-HiSD method uses the fixed Laplacian metric
$M_{\mathrm{Lap}}=A_h+\alpha I$ from~\eqref{eq:laplacian-precond},
with $\alpha=1$ and the present two-dimensional $A_h$; $M_{\mathrm{Lap}}$
is factorized once before the iteration and then kept fixed.

For the fixed-grid comparison, we take $N=128$ and stop when $\|\nabla E_h(u_m)\|_h := h\|\nabla E_h(u_m)\|_2 < 10^{-6}$,
where $h=\pi/(N+1)$. All runs use $\tau=10^{-4}$ and $J=5$; the state step is
$\eta=10^{-4}$ for HiSD and $\eta=0.5$ for p-HiSD. The larger HiSD step $\eta=10^{-3}$ is unstable in this test. For mesh refinement, we run p-HiSD on $N=64,128,192,256$ for both
$p=3$ and $p=5$, using the same tolerance and p-HiSD parameters.

\begin{table}[htbp]
\centering
\footnotesize
\setlength{\tabcolsep}{4pt}
\renewcommand{\arraystretch}{0.95}
\caption{Two-dimensional Lane--Emden-type problem. 
Panel (a) compares HiSD and Laplacian-preconditioned p-HiSD on the fixed grid $N=128$.
Panel (b) reports mesh-refinement results for p-HiSD, where 
$r_m=\|\nabla E_h(u_m)\|_h$ denotes the final residual.}
\label{tab:lane-emden-2d}

\setcounter{tablepanel}{0}

\refstepcounter{tablepanel}
\label{tab:lane-emden-fixed}
\begin{tabular}{c c c c c}
\toprule
\multicolumn{5}{c}{\textbf{(a) Fixed grid, $N=128$}} \\
\midrule
$p$ & Method & $\eta$ & Iter. & Final $r_m$ \\
\midrule
$3$ & HiSD   & $10^{-4}$ & $32181$ & $1.00\times 10^{-6}$ \\
$3$ & p-HiSD & $0.5$     & $45$    & $7.41\times 10^{-7}$ \\
$5$ & HiSD   & $10^{-4}$ & $34246$ & $1.00\times 10^{-6}$ \\
$5$ & p-HiSD & $0.5$     & $52$    & $9.12\times 10^{-7}$ \\
\bottomrule
\end{tabular}

\vspace{1mm}

\refstepcounter{tablepanel}
\label{tab:lane-emden-mesh}
\begin{tabular}{c c c c c c}
\toprule
\multicolumn{6}{c}{\textbf{(b) Mesh refinement, p-HiSD}} \\
\midrule
& & \multicolumn{2}{c}{$p=3$} & \multicolumn{2}{c}{$p=5$} \\
\cmidrule(lr){3-4}\cmidrule(lr){5-6}
$N$ & DoF & Iter. & Final $r_m$ & Iter. & Final $r_m$ \\
\midrule
$64$  & $4096$  & $45$ & $7.54\times 10^{-7}$ & $52$ & $9.77\times 10^{-7}$ \\
$128$ & $16384$ & $45$ & $7.41\times 10^{-7}$ & $52$ & $9.12\times 10^{-7}$ \\
$192$ & $36864$ & $45$ & $7.39\times 10^{-7}$ & $52$ & $9.00\times 10^{-7}$ \\
$256$ & $65536$ & $45$ & $7.38\times 10^{-7}$ & $52$ & $8.96\times 10^{-7}$ \\
\bottomrule
\end{tabular}
\end{table}

Table~\ref{tab:lane-emden-fixed} shows that, on the fixed grid $N=128$,
Laplacian-preconditioned p-HiSD reaches the tolerance in $45$ and $52$
iterations for $p=3$ and $p=5$, respectively, compared with $32181$ and
$34246$ iterations for HiSD. In the p-HiSD mesh-refinement test in Table~\ref{tab:lane-emden-mesh}, the
iteration counts remain unchanged for this initial condition and tolerance as
$N$ increases from $64$ to $256$; all reported residuals are below $10^{-6}$,
and the final Hessian index is one. These results are consistent with the Laplacian metric reducing the leading
$h^{-2}$ stiffness: on the fixed grid it permits a much larger stable state
step, while in the mesh-refinement test the p-HiSD iteration counts remain
unchanged across the tested meshes.

\subsubsection{Allen--Cahn equation}
\label{subsec:allen_cahn}

The third example is an index-$1$ saddle-search problem for the Allen--Cahn
energy on the two-dimensional domain $\Omega=(0,1)^2$,
\begin{equation*}
E(u)=\int_{\Omega}\left(\frac{1}{2}|\nabla u|^2+\frac{1}{\xi^2}F(u)\right)\,dx ,
\end{equation*}
where $u\in H^1(\Omega)$, $\xi>0$ is the interfacial width, and $F(u)=\frac14(u^2-1)^2$. With homogeneous Neumann boundary conditions, the critical points satisfy $-\Delta u+\xi^{-2}(u^3-u)=0$; we target the index-$1$ saddle separating the two stable phases $u\approx \pm1$.

For the numerical discretization, we use a uniform $N\times N$ grid with $N=80$ and mesh size $h=\frac{1}{N-1}$. The Laplacian is approximated by the standard five-point finite difference stencil with the usual boundary modifications for the Neumann condition. We set $\xi=0.07$, giving $\xi/h\approx 5.5$, sufficient to resolve the diffuse interface without significant grid pinning. The iteration is terminated when $\|\nabla E(u_m)\|\le 10^{-6}$.

This problem is strongly stiff, and standard HiSD with $M=I$ is stable only for very small step sizes: in our tests, $\eta=10^{-5}$ is admissible, while $\eta=10^{-4}$ already leads to rapid divergence. To overcome this restriction, we use a heuristic two-stage p-HiSD strategy to balance cost and accuracy.

In the initial transient phase, we use a frozen preconditioner $M_1 = -\Delta_h + \frac{2}{\xi^2} I$. This replaces the spatially varying reaction curvature $(3u^2-1)/\xi^2$ by a positive constant surrogate, producing a cheap SPD metric that captures the dominant stiffness of the Laplacian term without tracking the full Hessian geometry. This approximation is sufficient to stabilize the early iterations and allows the use of step size $\eta=0.2$. 

As the trajectory approaches the saddle, the constant shift in $M_1$ becomes too crude to resolve the spatial variation in the reaction curvature. We therefore switch, once the relative decrease of the gradient norm over the last 10 iterations falls below 10\%, to a second frozen shifted-Hessian preconditioner $M_2(u)=H(u)+\sigma I$, where
$H(u)=-\Delta_h+\operatorname{diag}((3u^2-1)/\xi^2)$ and
$\sigma=\max(0,-\lambda_{\min}(H(u)))+0.1$ ensures positive definiteness.

Compared with $M_1$, the metric $M_2(u)$ retains the spatially varying reaction curvature and therefore provides a more faithful approximation of the local Hessian geometry near the saddle. This improved curvature matching allows a larger step size, $\eta=1.0$, in the final convergence phase. Both stages use $\tau=10^{-2}$ and $J=5$ inner iterations for the frame update. Each preconditioner is factorized once via sparse Cholesky decomposition and remains frozen thereafter, so the extra cost appears mainly in the one-time setup rather than in every outer iteration. 

\begin{figure}[htbp]
  \centering
  \includegraphics[width=0.55\textwidth]{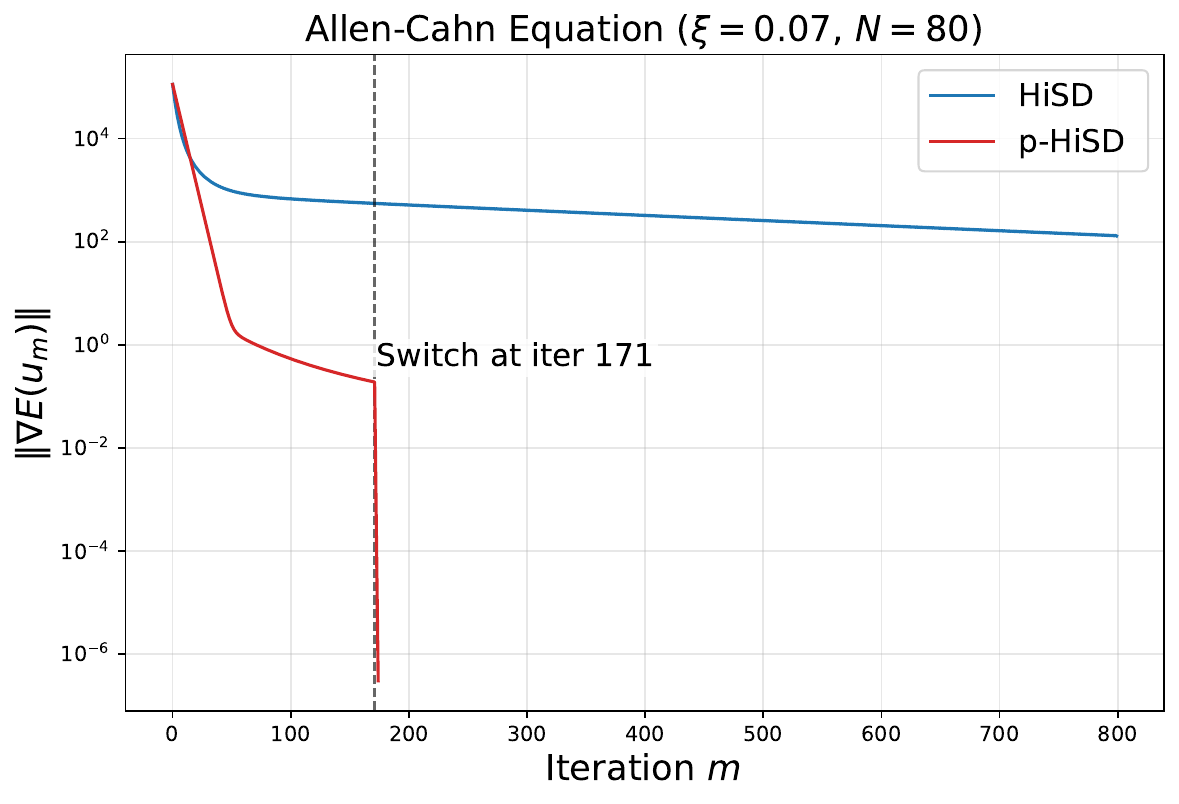}
  \caption{Allen--Cahn: convergence histories of the discrete gradient norm
  $\|\nabla E(u_m)\|$. Standard HiSD decreases slowly under the tested step size,
  whereas the two-stage p-HiSD strategy reaches the prescribed tolerance in
  $175$ iterations, with the preconditioner switched at iteration $171$.}
  \label{fig:5.3}
\end{figure}

Figure~\ref{fig:5.3} compares the convergence histories. Under the step-size
restriction, standard HiSD decreases slowly: after $800$ iterations, the
gradient norm is still approximately $129$. In contrast, the two-stage p-HiSD
strategy reaches the prescribed tolerance in this test. The first stage using
$M_1$ reduces the gradient norm to about $0.19$ within $171$ iterations, and
after switching to $M_2$ the method reaches
$\|\nabla E(u_m)\|\approx 3\times 10^{-7}$ in $4$ additional steps. This
example indicates that a low-cost approximate metric can be useful in the
transient regime, while a curvature-aware metric improves the local
preconditioning near the saddle.

\subsection{Non-convex optimal control of an elliptic equation}
\label{subsec:optimal_control}
Finally, we test HiSD and p-HiSD on the reduced functional of a non-convex
elliptic optimal-control problem. Although the control problem is written in
minimization form, the reduced functional $\hat J$ is non-convex in this test;
we therefore use it as a saddle-search benchmark and seek an index-$1$ saddle
in the control space. Let $\Omega=(0,1)$ and consider
\begin{equation*}
\min_{u\in H_0^1(\Omega)} J(y,u)
:=
\frac12 \|y-y_d\|_{L^2(\Omega)}^2
+
\frac{\lambda}{2}\|u\|_{H^1(\Omega)}^2,
\end{equation*}
subject to the nonlinear elliptic state equation
\begin{equation*}
-y''+y+y^3 = g(u) \quad \text{in } \Omega,
\qquad
y=0 \quad \text{on } \partial\Omega .
\end{equation*}
Here $g(u)=0.001u^2+\cos(2\pi u)$, the desired state is
$y_d(x)=-2\sin(\pi x)$, and we take $\lambda=0.02$. For each control $u$, let $y(u)$ solve the state equation and define the
reduced functional $\hat J(u):=J(y(u),u)$.

We discretize $\Omega=(0,1)$ by second-order finite differences on a uniform
grid with $N$ interior points and $h=1/(N+1)$, representing the control by
$u\in\mathbb{R}^N$. Let $A_h$ be the one-dimensional Dirichlet discrete
negative Laplacian and $I$ the identity matrix. 

For each control $u$, we solve the discrete state equation
$A_h y+y+y^3-g(u)=0$ by Newton's method. The reduced gradient is then
evaluated by the adjoint method, with $p$ determined from
$(A_h+I+\operatorname{diag}(3y^2))p=y-y_d$.

Dropping the common quadrature factor $h$, we use
$\nabla \hat J(u)=\lambda(A_h+I)u+p\odot g'(u)$ in the saddle dynamics,
where $\odot$ denotes componentwise multiplication. Reduced Hessian-vector
products are approximated by centered finite differences of this gradient,
with increment $10^{-5}$ along the normalized direction.

We compare standard HiSD with three p-HiSD variants using fixed metrics:
a shifted Cholesky metric and the frozen spectral metric~\eqref{eq:spectral-precond}
constructed from the initial dense reduced Hessian
$H_0=\nabla^2\hat J(u_0)$, and the Laplacian metric
$M_{\mathrm{Lap}}=A_h+I$ from~\eqref{eq:laplacian-precond}. The Cholesky
shift is chosen so that the metric is positive definite.

In the experiment, we take $N=255$ ($h=2^{-8}$) and set $u_0(x)=0.5\sin(2\pi x)$. We seek an index-$1$ saddle and stop when
$\|\nabla \hat J(u_m)\|_2<10^{-6}$. All methods use $\tau=10^{-3}$ and
$J=5$ direction updates per outer step; the state step is $\eta=10^{-4}$
for HiSD and $\eta=1$ for all p-HiSD variants. The larger HiSD step
$\eta=10^{-3}$ is unstable here.

\begin{figure}[htbp]
\centering
\includegraphics[width=0.55\textwidth]{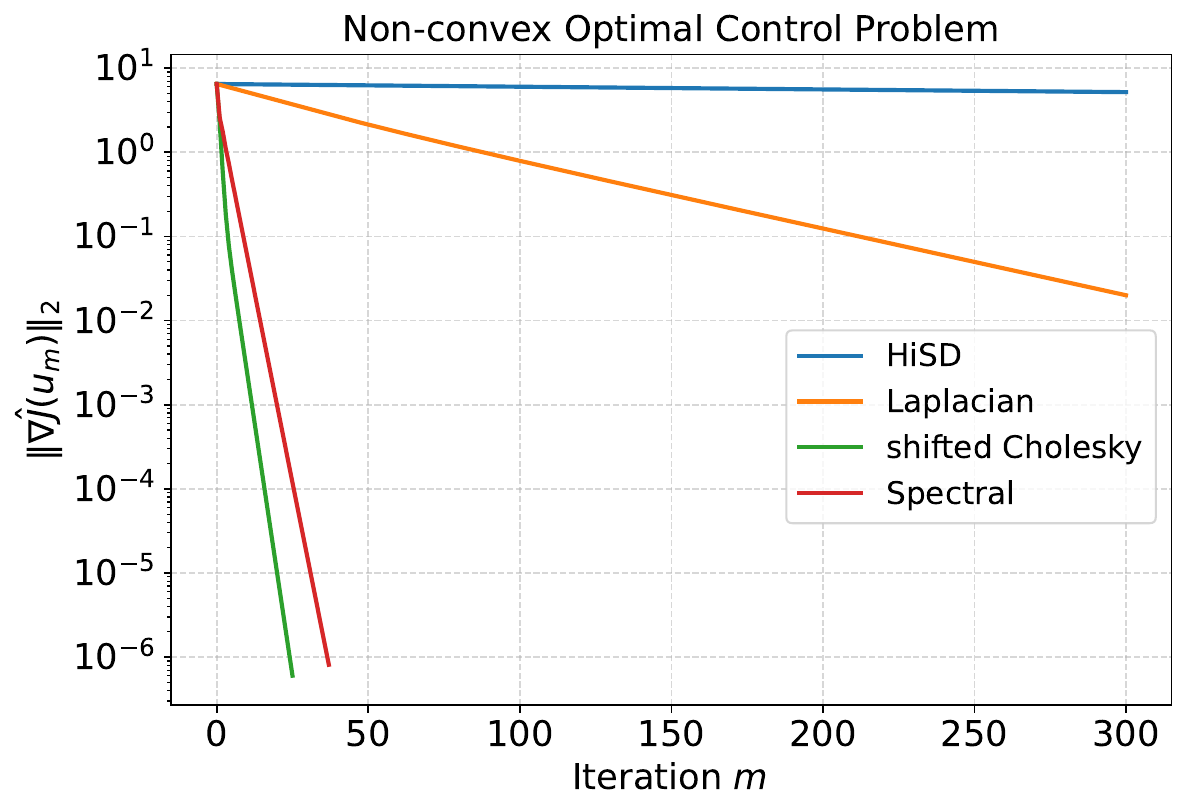}
\caption{Reduced-gradient residuals for the non-convex optimal-control problem.
Only the first $300$ outer iterations are shown; stopping statistics use a
$10^4$-iteration budget.}
\label{fig:6}
\end{figure}

Figure~\ref{fig:6} shows the reduced-gradient residuals. With the
$10^4$-iteration budget, standard HiSD remains above the tolerance
(final residual $1.07\times 10^{-2}$). The shifted Cholesky, frozen spectral,
and Laplacian p-HiSD variants reach $\|\nabla\hat J(u_m)\|_2<10^{-6}$ in
$25$, $37$, and $841$ iterations, respectively, all with final reduced
Hessian index one. These results indicate that, in this reduced
optimal-control test, the two Hessian-based metrics are particularly
effective in iteration count, while the Laplacian metric still reaches the
prescribed tolerance as an operator-based alternative.

\section{Conclusion}
\label{sec:conclusion}
In this work, we developed a preconditioned high-index saddle dynamics (p-HiSD) framework for computing saddle points in a Riemannian metric induced by a symmetric positive definite preconditioner $M$. We established the theoretical foundations of the method by proving the invariance of critical points and their Morse indices under preconditioning (Proposition~\ref{prop:invariance}) and demonstrating local exponential stability by Jacobian spectral analysis (Theorem~\ref{thm:stability}). Furthermore, we derived a discrete linear convergence rate of $(\kappa_M-1)/(\kappa_M+1)$, yielding an iteration complexity of $O(\kappa_M\log(1/\epsilon))$ (Theorem~\ref{thm:convergence}). 
To translate these theoretical gains into practice, we discussed several preconditioners, ranging from spectral and subspace-inertial methods to Jacobi, block Jacobi, and incomplete Cholesky variants, accompanied by a heuristic selection guideline (Table~\ref{tab:precond_selection}).

Numerical experiments on a sequence of test problems support these theoretical findings. 
The quadratic rate-verification test (Section~\ref{subsec:rate_verification}) is consistent with the predicted local convergence rates.
The two-dimensional landscape tests (Section~\ref{subsec:two_dimensional_tests}) illustrate how the preconditioned metric affects search trajectories and endpoint selection. 
In the modified Rosenbrock test (Section~\ref{subsec:modified_rosenbrock}), the tested p-HiSD variants reach the prescribed tolerance, whereas standard HiSD remains above the tolerance within the $10^4$-iteration budget. 
For the stiff coupled bistable chain (Section~\ref{subsec:diatomic_chain}),
the Laplacian-dominated PDE discretizations
(Section~\ref{subsec:laplacian_dominated_pde}), and the reduced optimal-control
problem (Section~\ref{subsec:optimal_control}), the tested preconditioned
variants allow larger stable state steps and require fewer outer iterations
than standard HiSD under the corresponding parameter choices.
The Lane--Emden mesh-refinement test (Section~\ref{subsec:lane_emden})
further shows unchanged p-HiSD iteration counts across the tested grids.
These results suggest that preconditioning can play an important role in accelerating HiSD and improving its practical performance on stiff and ill-conditioned problems.

Future work includes extending this framework to infinite-dimensional settings via operator preconditioning for PDE-constrained optimization, developing more adaptive preconditioners updated along search trajectories, and combining p-HiSD with downward/upward search for automated exploration of complex solution landscapes.

\section*{Code Availability}
\noindent
The source code related to the implementation of the proposed method is available at
\url{https://github.com/hbz1220/p-HiSD}.

\bibliographystyle{siamplain}
\bibliography{references}

\end{document}